\def\Xint#1{\mathchoice
	{\XXint\displaystyle\textstyle{#1}}%
	{\XXint\textstyle\scriptstyle{#1}}%
	{\XXint\scriptstyle\scriptscriptstyle{#1}}%
	{\XXint\scriptscriptstyle%
		\scriptscriptstyle{#1}}%
	\!\int}
\def\XXint#1#2#3{{\setbox0=\hbox{$#1{#2#3}{%
				\int}$ }
		\vcenter{\hbox{$#2#3$ }}\kern-.6\wd0}}
\def\barint{\, \Xint -} 
\def\bariint{\barint_{} \kern-.4em \barint}
\def\bariiint{\bariint_{} \kern-.4em \barint}
\renewcommand{\iint}{\int_{}\kern-.34em \int} 
\renewcommand{\iiint}{\iint_{}\kern-.34em \int} 
\DeclareMathAlphabet{\mathcal}{OMS}{cmsy}{m}{n}
\theoremstyle{plain}
\newtheorem{theorem}{Theorem}[section]
\newtheorem{definition}[theorem]{Definition}
\newtheorem{lemma}[theorem]{Lemma}
\newtheorem{proposition}[theorem]{Proposition}
\theoremstyle{definition}
\newtheorem{remark}[theorem]{Remark}
\renewcommand{\:}{\colon}
\renewcommand{\bar}[1]{\overline{#1}}
\newcommand{\ep}{\varepsilon}
\newcommand{\Lip}{{ Lip}}
\title{Well-Posedness of Contact Discontinuity Solutions and Vanishing Pressure Limit for the Aw-Rascle Traffic Flow Model} 
\author{Zijie Deng, Wenjian Peng, Tian-Yi Wang, Haoran Zhang}
\numberwithin{equation}{section}
\setlist[enumerate]{leftmargin=*}
\begin{document}

	\begin{abstract}
		This paper investigates  the well-posedness of contact discontinuity solutions and the vanishing pressure limit for the Aw-Rascle traffic flow model with general pressure functions. The well-posedness problem is formulated as a free boundary problem, where initial discontinuities propagate along linearly degenerate characteristics.
		To address vacuum degeneracy, a condition at density jump points is introduced, ensuring a uniform lower bound for density. The Lagrangian coordinate transformation is applied to fix the contact discontinuity.The well-posedness of contact discontinuity solutions is established, showing that compressive initial data leads to finite-time blow-up of the velocity gradient, while rarefactive initial data ensures global existence. For the vanishing pressure limit, uniform estimates of velocity gradients and density are derived via level set argument. The contact discontinuity solutions of the Aw-Rascle system are shown to converge to those of the pressureless Euler equations, with matched convergence rates for characteristic triangles and discontinuity lines.
		Furthermore, under the conditions of pressure, enhanced regularity in non-discontinuous regions yields convergence of blow-up times.
	\end{abstract}
	
	\keywords{
		\textbf{Aw-Rascle traffic flow model; Well-posdness; Blow-up; Lower bound of density; Vanishing pressure  limit; Convergence rate}
	}
	
	\maketitle
	
	\section{Introduction}
	In this paper, we consider the Aw-Rascle traffic flow model proposed by Aw and Rascle\cite{A. Aw and M. Rascle}:
	\begin{equation}\label{AR}
		\begin{cases}
			\rho _{t} +\left ( \rho u \right ) _{x} =0,\\
			\left ( \rho \left ( u + P \right )  \right ) _{t} +\left ( \rho u\left ( u + P \right ) \right )_{x}=0,
		\end{cases}
	\end{equation}
	where $t>0$ and $x$ represent time and space, $\rho$, $u$, $P$ are density, velocity, and pressure respectively. The pressure $P$ is the function of density $\rho$ and the small parameter $\varepsilon>0$, satisfying $\lim_{\varepsilon \to 0} P\left ( \rho ,\varepsilon  \right ) =0$. Here we consider the general pressure
	\begin{equation}\label{general pressure}
		P\left(\rho,\varepsilon\right)=\varepsilon^{2}p\left(\rho\right).
	\end{equation}
	In the paper, for $\rho>0$, $p\in C^{2}\left(\mathbb{R^{+}}\right)$ satisfies the following conditions:
	\begin{equation}\label{condition of p}
		{p}' \left(\rho\right)>0, \quad
		2{p}' \left(\rho\right)+\rho{p}'' \left(\rho\right)>0,
		\quad \lim_{\rho \to +\infty}p\left(\rho\right) = +\infty,
		\quad \lim_{\rho \to 0}p\left(\rho\right) = k.	
	\end{equation}
	And for different $k$ we have the following two cases:
	
	Case 1: If $k$ is a finite constant, By $\eqref{AR}_{2}-\varepsilon^{2}k\eqref{AR}_{1}$, \eqref{AR} are equivalent to
	\begin{equation*}\label{}
		\begin{cases}
			\rho _{t} +\left ( \rho u \right ) _{x} =0,\\
			\left ( \rho \left ( u + \varepsilon^{2}\left( p\left(\rho\right)-k \right) \right )  \right ) _{t} +\left ( \rho u\left ( u + \varepsilon^{2}\left( p\left(\rho\right)-k \right) \right ) \right )_{x}=0.
		\end{cases}
	\end{equation*}
	Therefore, we can take $k=0$, otherwise we can let $\tilde{p}\left(\rho\right)=p\left(\rho\right)-k$, then the form of \eqref{AR} is unchanged. In this case, we have $\lim_{\rho \to 0}p\left(\rho\right) = 0$, which includes $\gamma$-law: $p\left(\rho\right)=\rho^{\gamma}$ with $\gamma\ge 1$.

	Case 2: If $k=-\infty$, then $\lim_{\rho \to 0}p\left(\rho\right) = -\infty$,
	which includes $p\left(\rho\right)=\ln_{}{\rho} $.
	To define the inital date, we need to introduce  $\Lip_s(\mathbb{R})$ to denote the set of piecewise Lipschitz functions:
	\begin{definition}
		For each $f$ is belongs to $\Lip_s(\mathbb{R})$, there exists a finite set $\{x_i\}_{i=1}^{n}$ of the first kind of discontinuity points, in which $x_i<x_{i+1}$ for any $i=1, \cdots, n-1$, such that $f$ is Lipschitz function on $(x_i,x_{i+1})$. Respectively, $f$ is belongs to $C^1_s$, if $f$ is $C^1$ function on each $(x_i,x_{i+1})$.
	\end{definition}
	Here, $\{x_i\}_{i=1}^{n}$ is called partition points set. By the Lipschitz continuous at each subinterval, at each partition point $x_i$, there exists the left limit   $\lim_{x\to x_{i}^{-}} f(x) = f\left(x_{i}^{-}\right)$ and the right limit $\lim_{x\to x_{i}^{+}} f(x) = f\left(x_{i}^{+}\right)$, but may not be equal. And, $\left[ f\right](x_i) = f\left(x_{i}^{+}\right) - f\left(x_{i}^{-}\right)$ is the jump of $f$ at $x_i$.

	The initial data of \eqref{AR} can be given as
	\begin{equation}\label{intial value of AR}
		\left(\rho,u\right)|_{t=0} = \left(\rho_{0},u_{0}\right),
	\end{equation}
	where both $\rho_{0}$ and $u_{0}$ are bounded, $\rho_{0}\in Lip_{S} \left(\mathbb{R}\right)$ , $u_{0}\in Lip \left(\mathbb{R}\right)$ and $\rho _{0} \ge \underline{\rho_{0}}>0$.
	At each partition point $x_i$ of density $\rho
	_0$, following conditions are introduce for the low bounded of density: 
	\begin{itemize}
		\item  $\varepsilon$-condition: If $\rho_{0}(x_{i}^{-})<\rho_{0}(x_{i}^{+})$ at $x_{i}$, for $x>x_{i}$,
		\begin{equation}\label{assumption1}
			u_{0}\left(x_{i}\right)+\varepsilon^{2}p\left(\rho_{0}\right) \left(x_{i}^{-}\right) > u_{0}\left(x\right).
		\end{equation}
		
		\item $0$-condition: If $\rho_{0}(x_{i}^{-})<\rho_{0}(x_{i}^{+})$ at $x_{i}$, for $x>x_{i}$,
		\begin{equation}\label{assumption2}
			u_{0}\left(x_{i}\right) > u_{0}\left(x\right).
		\end{equation}
	\end{itemize}
	The necessary and sufficient of above conditions will be discussed in Section 3 and Section 4. And, $0$-condition is the joint of $\varepsilon$-condition for $\varepsilon> 0$.
	\begin{remark}
		The discontinuous points of $\rho$ are of the first kind. If the function $f\in \Lip_s(\mathbb{R})$, then $f$ can be decomposed into
		\begin{equation}
			f=f_{J}+f_{C},
		\end{equation}
		where $f_{J}$ represents the jump part of function $f$; $f_{C}$ represents the absolutely continuous part of the function $f$ and is a Lipschitz continuous function.
	\end{remark}
	
	\begin{remark}
		
		If the curve $x\left(t;x_{0},0\right)$ is from the initial discontinuity point $\left(x_{0},0\right)$, the discontinuity will propagate along this curve. On the other hand, according to the Rankine-Hugoniot condition, $\left[ u\right]=0$ leads to 
		\begin{equation}
			\frac{dx}{dt} = \frac{\left[\rho u\right]}{\left[\rho\right]}
			= u.
		\end{equation}
		Then, the discontinuous curve $x\left(t;x_{0},0\right)$ satisfies
		\begin{equation}
			\begin{cases}
				\frac{dx\left(t;x_{0},0\right)}{dt}=u\left(x\left(t;x_{0},0\right),t\right) \\
				x\left(0;x_{0},0\right) = x_{0}.
			\end{cases}
		\end{equation}
	\end{remark}
	
	
	For fixed $\varepsilon>0$, the two eigenvalues of Aw-Rascle model consist: the genuinely nonlinear one, and the linearly degenerate one. For the well-posedness of Aw-Rascle model, we start with local existence and gradient blow-up.

	For the local existence, if the solution is $C^{1}$ function without vacuum, \eqref{AR} is equivalent to strict hyperbolic system. In this case, the local existence of various types of function spaces, including $C^{1}$ function class, $H^{s}$ function class and $BV$ function class, has been studied. Under the condition of $\rho >0$ in the whole space, Li-Yu\cite{Li Ta-Tsien Yu Wenci}  could provide the local well-posedness of one-dimensional $C^{1}$ solutions on each compact characteristic triangle.

	For gradient blow-up of conservation laws, in 1964, Lax\cite{Lax P D} proved this conclusion for one-dimensional $2\times 2$ genuinely nonlinear hyperbolic system. The results show that for a strictly hyperbolic system, if the initial value is a small smooth perturbation near a constant state, then the initial compression in any genuinely nonlinear characteristic field will produce a gradient blow-up in  finite time. John\cite{John F}, Li-Zhou-Kong\cite{Li Ta-Tsien Zhou Yi Kong Dexing, Li Ta-Tsien Zhou Yi Kong Dexing 2}, Liu\cite{Liu Taiping} have proven the generation of shock waves for $n\times n$ conservation law equations under different conditions.

	The Riemann problem of \eqref{AR} with $p\left(\rho\right) = \rho^{\gamma}$ was solved in \cite{A. Aw and M. Rascle}. Pan-Han\cite{L. Pan X. Han} introduced the Chaplygin pressure function into the Aw–Rascle traffic model and solved the respective Riemann problem. It is noticeable that  under the generalized Rankine–Hugoniot condition and entropy condition, they establish the existence and uniqueness of $\delta$-wave. Cheng-Yang\cite{H. Cheng and H. Yang} solved the Riemann problem for the Aw–Rascle model with the modified Chaplygin gas pressure. Godvik and Hanche-Olsen\cite{M. Godvik and H. Hanche-Olsen} proved the existence of the weak entropy solution for the Cauchy problem with vacuum. By the compensated compactness method, Lu\cite{Y. Lu} proved the global existence of bounded entropy weak solutions for the Cauchy problem of general nonsymmetric systems of Keyfitz–Kranzer type. When the parameter $n=1$, the system is the Aw–Rascle model.

	When the pressure $P\left(\rho\right)\equiv 0$ in the Aw-Rascle model, the equations are simplified to pressureless Euler equations:
	\begin{equation}\label{pressureless Euler}
		\begin{cases}
			\rho _{t} +\left ( \rho u \right ) _{x} =0,\\
			\left ( \rho u \right ) _{t} +\left ( \rho u^{2} \right )_{x}=0,
		\end{cases}
	\end{equation}
	The system (1.3) was used to describe both the process of the motion of free particles sticking under collision\cite{Brenier Grenier} and the formation of large scale in the universe\cite{E Weinan Rykov Sinai,Shandarin}. The research on the pressureless Euler equations mainly focuses on the well-posedness of weak solutions. Brenier-Grenier\cite{Brenier Grenier} and E-Sinai\cite{E Weinan Rykov Sinai} independently obtained the existence of global weak solutions, and E-Sinai obtained the explicit expression of weak solutions by using the generalized variational principle. Wang-Huang-Ding\cite{Wang Zhen Huang Feimin Ding Xiaxi} prove the global existence of generalized solution to the $L^\infty$ initial data. Boudin\cite{Boudin} proved that the weak solution is the limit of the solution of the viscous pressureless Euler equations. Furthermore, Huang\cite{Huang Feimin} proved the existence of entropy solutions for general pressureless Euler equations.
	Wang-Ding \cite{Wang Zhen Ding Xiaxi} proved the uniqueness of the weak solution of the Cauchy problem satisfying the Oleinik entropy condition when the initial value $\rho_{0}$ is a bounded measurable function. Bouchut-James\cite{Bouchut F James F.} also obtained similar results. Huang-Wang\cite{Huang Feimin Wang Zhen} proved the uniqueness of the weak solution when the initial value is the Radon measure.
	
	To consider the relationship between the Aw-Rascle model and the pressureless Euler equations, a natural idea is the vanishing pressure  limit, which considers limits of  $\varepsilon\rightarrow0$ as the pressure in the form of \eqref{general pressure}. The first study of vanishing pressure  limit is on the Riemann solution of the isentropic Euler equations by Chen-Liu\cite{Chen Gui-Qiang Liu Hailiang},  the limiting solution in which includes $\delta$-wave by concentration and vacuum by cavitation. Later, they  extended the above result to the full Euler case\cite{Chen Gui-Qiang Liu Hailiang 2}. Recently, Peng-Wang\cite{Peng Wenjian Wang Tian-Yi} studied the case of  $C^1$ solutions by a new level set argument. They showed that: For compressive initial data, the continuous solutions converge to a mass-concentrated solution of the pressureless Euler system; For rarefaction initial data, the solutions instead converge globally to a continuous solution. In \cite{Peng Wenjian Wang Tian-Yi Xiang Wei}, the authors studied the hypersonic limitation for $C^1$ solution, and showed the convergence of blow-up time. For the Aw-Rascle model, Shen and Sun \cite{C. Shen M. Sun} proved that as $\varepsilon\rightarrow0$, the Riemann solutions of the perturbed Aw-Rascle system converge to the ones of the pressureless Euler equations \eqref{pressureless Euler}. Pan-Han\cite{L. Pan X. Han}  proved that for the Riemann problem, as the Chaplygin pressure $P\left(\rho\right)=-\frac{\varepsilon}{\rho}$ vanishes, the Riemann solutions of the Aw-Rascle traffic model  converge to the respective solutions of the pressureless gas dynamics model \eqref{pressureless Euler}.


	This paper addresses two fundamental problems for the Aw-Rascle traffic flow model: (i) the well-posedness of contact discontinuity solutions  with initial velocity  $u_{0}$ and piecewise Lipschitz initial density $\rho_{0}$ satisfying the $\varepsilon$-condition, and (ii) the vanishing pressure limit as $\varepsilon\to 0$. For problem (i), the analysis begins by establishing a strictly positive lower bound for the density through a partition of the domain, and Lagrangian coordinate transformations. This lower bound leads to a dichotomy: compressive initial data induce finite time velocity gradient blow-up, whereas rarefactive initial data guarantee global existence of solutions. For problem (ii), uniform density estimates independent of $\varepsilon$ are derived via level set argument. It is proven that Aw-Rascle solutions converge to pressureless Euler solutions as  $\varepsilon\to 0$: compressive data lead to mass-concentrated solutions, while rarefactive data yield globally regular solutions, with matching 
	$O(\varepsilon^2)$ convergence rates for velocity fields and characteristic triangles. Furthermore, under the conditions of  pressure, convergence of the blow-up time is established through enhanced regularity analysis in non-discontinuous regions.

	We have the following two theorems.

	\begin{theorem}\label{theorem 1}
		For fixed $\varepsilon>0$, and \eqref{AR} - \eqref{intial value of AR}, if $\lim_{\rho \to 0}p\left(\rho\right)$ satisfies one of the following two cases: 
		
		\noindent Case 1: $\lim_{\rho \to 0}p\left(\rho\right)=0$ and the initial data satisfies the $\varepsilon$-condition; 
		
		\noindent Case 2: $\lim_{\rho \to 0}p\left(\rho\right)=-\infty$;
		
		\noindent then there exists a time $T_{b}^{\varepsilon}$ such that on $\mathbb{R}\times\left[0,T_{b}^{\varepsilon}\right)$, there exists contacted discontinuity solution $\left(\rho^{\varepsilon},u^{\varepsilon}\right)$ satisfying 
		
		(1) If $\inf_{x\in\mathbb{R}} u_{0}'\left(x\right)\ge 0$, $T^{\varepsilon}_{b}=+\infty$, the solution exists globally, 
		\begin{equation*}
			\rho^{\varepsilon} \in Lip_s\left(\mathbb{R}\times\left[0,+\infty\right)\right),\quad
			u^{\varepsilon}\in Lip \left(\mathbb{R}\times\left[0,+\infty\right)\right).
		\end{equation*}

		(2) If $\inf_{x\in\mathbb{R}} u_{0}'\left(x\right)<0$, $T_{b}^{\varepsilon}$ is finite and there exists at least one $X_{b}^{\varepsilon}$ such that as $\left(x,t\right)\to \left(X_{b}^{\varepsilon},T_{b}^{\varepsilon}\right)$, $u_{x}^{\varepsilon}\left(x,t\right)\to -\infty$ while $\rho^{\varepsilon}\left(x,t\right)$ is upper and lower bounded. And the solution stands 
		\begin{equation*}
			\rho^{\varepsilon} \in Lip_s \left(\mathbb{R}\times\left[0,T_{b}^{\varepsilon}\right)\right),\quad
			u^{\varepsilon}\in Lip \left(\mathbb{R}\times\left[0,T_{b}^{\varepsilon}\right)\right).
		\end{equation*} 
		And, $\Gamma^{\varepsilon} = \left \{ \left ( x,t \right ) \in \mathbb{R}\times\left[0,T_{b}^{\varepsilon}\right) : x=x_{2}^{\varepsilon}\left(t\right) \right \} $ is the discontinuous curve and $\left(x_{2}^{\varepsilon}\right)'\left(t\right)$ is a Lipschitz function with respect to $t$, satisfying
		\begin{equation*}
			\frac{dx_{2}^{\varepsilon}\left(t\right)}{dt} = u^{\varepsilon}\left(x_{2}^{\varepsilon}\left(t\right),t\right) .
		\end{equation*}
	\end{theorem}
	
	\begin{theorem}\label{Vanishing Pressure Limit}
		For $\varepsilon>0$, $\left(\rho^{\varepsilon},u^{\varepsilon}\right)$ are the unique solution of \eqref{AR} - \eqref{intial value of AR} with $0-$condition on $\mathbb{R} \times\left[0,T_{b}^{\varepsilon}\right)$.
		And,  $\left(\bar{\rho},\bar{u}\right)$ are the unique solution of \eqref{pressureless Euler} with initial data \eqref{intial value of AR} on $\mathbb{R} \times\left[0,T_{b}\right)$.
		
		(1) For any $0<T_{\ast } < T_{b}$, there exists a $\varepsilon_{\ast}>0$, such that, for $0<\varepsilon<\varepsilon_{\ast}$, $T_{\ast}<T_{b}^{\varepsilon}$. As $\varepsilon\to 0$, 
		\begin{equation*}
			\rho^{\varepsilon}\to \bar{\rho} \,\,{\rm in} \,\, \mathcal{M} \left(\mathbb{R}\times\left[0,T_{\ast}\right] \right),\quad
			u^{\varepsilon}\to \bar{u} \,\,{\rm in} \,\, Lip \left(\mathbb{R}\times\left[0,T_{\ast}\right]\right),
		\end{equation*}
		And, $	T_{b} \le \varliminf_{\varepsilon\to0} T_{b}^{\varepsilon}$. Furthermore, there are the following convergence rates: For $i=1,2$, 
		$$\left | u^{\varepsilon } - \bar{u} \right | \sim O\left ( \varepsilon ^{2} \right ), 
		\quad \left | \lambda_{i}^{\varepsilon } - \bar{u} \right | \sim O\left ( \varepsilon ^{2} \right ), \quad \left | x_{2}^{\varepsilon } - \bar{x} \right | \sim O\left ( \varepsilon ^{2} \right ).$$
		where $\lambda_{i}$ are the eigenvalues of \eqref{AR}; $x_{2}^{\varepsilon}$ and $\bar{x}$ are the discontinuity lines of \eqref{AR} and \eqref{pressureless Euler} respectively.
		
		(2) If $\rho_{0}\in C^1_{s} \left(\mathbb{R}\right)$, $u_{0}\in C^{1} \left(\mathbb{R}\right)$, $I\left(\rho\right):=\frac{2\rho{p}'\left(\rho\right)+\left(\rho\right)^{2}{p}'' \left ( \rho \right ) }{\left(\rho{p}'\left(\rho\right)\right)^{2}}$ satisfies following conditions:
		\begin{enumerate}[label=(\alph*)]
			\item $I\left(\rho\right)$ is an increasing function with respect to $\rho$;
			\item There exists a small $\delta$ such that $\int_{0}^{\delta}\frac{I(s)}{s^{2}}ds=+\infty$.
		\end{enumerate}
		Then, the blow-up time  convergences: $\lim_{\varepsilon \to 0} T _{b}^{\varepsilon} = T_{b}$.
	\end{theorem}
	
	Comparing with classical solutions of the compressible Euler equations, contact discontinuity solutions of the Aw-Rascle model and their vanishing pressure limits pose unique analytical challenges. First, the characteristic structure of the Aw-Rascle system, comprising both a linearly degenerate field and a genuinely nonlinear field, induces distinct regularity properties in Riemann invariants, with initial discontinuities propagating along linearly degenerate characteristic curves. Second, the coupling between evolving discontinuity curves and the solution itself characterizes the problem as a free boundary problem. Third, density-dependent degeneracy in the Riccati-type equations governing the system necessitates new density lower-bound estimation, distinct from those for the compressible Euler equations. Fourth, discontinuities inherently reduce solution regularity, mandating uniform estimates in tailored function spaces to rigorously establish convergence. Finally, precise analysis of blow-up times requires delicate estimates in non-discontinuity regions, where enhanced regularity can be exploited.
	
	To overcome these challenges, three key ideas are introduced:
	(i) Lagrangian coordinate transformations: that map evolving discontinuity curves to the fixed boundaries; (ii) Density Lower bound analysis: the derivatives of Riemann invariants in smooth regions and the jump condition analysis at discontinuities;
	(iii) Time-directional derivative techniques: avoiding spatial discontinuities in Lagrangian. Level set argument are further developed to track invariant derivatives and establish uniform estimates for the vanishing pressure limit.
	
	The paper is organized as follows. Section 2 proves the existence of classical solutions to the Aw-Rascle system through Lagrangian coordinate transformations, on avoiding vacuum formation. Section 3 establishes the well-posedness of contact discontinuity solutions using uniform $L^\infty$  estimates for time-directional derivatives in Lagrangian coordinates. Section 4 studies the vanishing pressure limit, while Section 5 quantifies blow-up time convergence via modulus of continuity estimates in non-discontinuity domains. Finally, Section 6 determines sharp convergence rates ($O(\varepsilon^2)$ ) for characteristic curves and Riemann invariants.

	\section{The well-posedness of Aw-Rascle model with $C^{1}$ initial data}
	\subsection{Lagrangian coordinates}
	In this section, we consider the Lagrangian transformations for fixed $\varepsilon>0$
	that does not involve estimations, so we drop $\varepsilon$ of  $\left(\rho^{\varepsilon},u^{\varepsilon}\right)$.
	
	The eigenvalues of \eqref{AR} are
	\begin{equation} \label{eigenvalues}
		\begin{cases}
			\lambda_{1} =  u  -\varepsilon^{2}\rho {p}' \left(\rho\right) ,\\
			\lambda_{2} = u,
		\end{cases}
	\end{equation}
	and  Riemann invariants are
	\begin{equation}\label{Riemann invariants}
		\begin{cases}
			w = u ,\\
			z = u + \varepsilon^{2} p\left(\rho \right),
		\end{cases}
	\end{equation}
	while $p\left(\rho\right)$ can be represented by Riemann invariants as $p\left(\rho\right) = \frac{z -u }{\varepsilon^{2} }$.
	Then, for $\rho>0$, \eqref{AR} is equivalent to
	\begin{equation}\label{equation of Riemann invariants 2} 
		\begin{cases}
			u_{t} + \left(u -\varepsilon^{2}\rho{p}' \left(\rho\right)\right)u_{x} = 0, \\
			\left( u + \varepsilon^{2} p\left(\rho \right)\right)_{t} + 
			u \left( u + \varepsilon^{2} p\left(\rho \right)\right)_{x}=0.
		\end{cases}
	\end{equation}
	The derivatives along the charismatics are: for $i=1,2$,
	\begin{equation} 
		D_{i} = \partial_{t} + \lambda_{i} \partial_{x},
	\end{equation} 
	and the respective characteristic lines passing through $\left(\tilde{x},\tilde{t}\right)$ are defined as:
	\begin{equation}
		\begin{cases}\label{x-i}
			\frac{dx_{i}\left(t;\tilde{x},\tilde{t}\right)}{dt} = \lambda_{i} \left(x_{i}\left(t;\tilde{x},\tilde{t}\right),t\right) , \\
			x_{i}\left(\tilde{t};\tilde{x},\tilde{t}\right) = \tilde{x}.
		\end{cases}
	\end{equation}
	Combining \eqref{eigenvalues} and \eqref{equation of Riemann invariants 2}, we have
	\begin{equation}\label{}
		\begin{cases}
			D_{1} u = 0, \\
			D_{2} z = 0.
		\end{cases}
	\end{equation}
	Since the eigenvalue $\lambda_{1}$ is genuinely nonlinear and $\lambda_{2}$ is linearly degenerate, there will be contact discontinuity in density along the second family of eigenvalues. So we consider the following Lagrangian transformation: let $\tau=t$ and
	\begin{equation} \label{Lagrangian transformation}
		\begin{cases}
			\frac{d}{d\tau}x\left(\tau ; y\right) = u\left(x\left(\tau;y\right),\tau\right), \\
			x\left(0;y\right)=y.
		\end{cases}
	\end{equation}
	Let $u\left(x\left(\tau;y\right),\tau\right)=v\left(y,\tau\right)$, $J = \frac{\partial x}{\partial y}$ is the Jacobian determinant of the coordinate transformation satisfying:
	\begin{equation} \label{v and vy}
		\begin{cases}
			\frac{\partial }{\partial\tau}J = v_{y} \left(y,\tau\right), \\
			J \left(y,0\right)=1.
		\end{cases}
	\end{equation}
	In Lagrangian coordinates, we denote 
	\begin{equation}\label{variables in Lagrangian coordinates}
		g\left(y,\tau\right):=
		\rho\left(x\left(\tau;y\right),\tau\right),\quad
		Z\left(y,\tau\right): =z\left(x\left(\tau;y\right),\tau\right),
	\end{equation}
	and $Z\left(y,\tau\right) = v\left(y,\tau\right) + \varepsilon^{2} p\left(g\left(y,\tau\right)\right)$.
	
	Next, for the relationship between $J$ and $g$, in Lagrangian coordinates, $\eqref{AR}_{1}$ is equivalent to
	\begin{equation}
		g_{\tau} + g J^{-1}J_{\tau}=0,
	\end{equation}
	which equals to $ {\left (\ln (g J) \right ) } _{\tau } =0$.
	For $g>0$, integrating on $\tau$ leads to
	\begin{equation}\label{J}
		J\left(y,\tau\right) = \frac{g_{0}\left(y\right)}{g\left(y,\tau\right)},
	\end{equation}
	where $g_{0}(y)=g(y,0)$ is the initial density.
	Combining with \eqref{v and vy}, in Lagrangian coordinates, \eqref{equation of Riemann invariants 2} is equivalent to
	\begin{equation}\label{equation of Lagrangian transformation 2}
		\begin{cases}
			J _{\tau} = v _{y},\\
			v _{\tau} + \mu v _{y}=0, \\
			Z _{\tau}=0,
		\end{cases}
	\end{equation}
	where $\mu = -\varepsilon^{2}g{p}' \left(g\right) J^{-1} = -\frac{\varepsilon^{2}}{g_{0}}g^{2}{p}' \left(g\right)$. And, the initial data are
	\begin{equation}\label{new initial value}
		\left ( J, v ,Z \right ) |_{\tau=0} =\left ( 1, v_{0}\left(y\right), Z_{0} \left(y\right) \right ).
	\end{equation}

	In the  Lagrangian coordinates, one could introduce following direction derivative:
	\begin{equation} 
		D = \partial_{\tau} + \mu \partial_{y},
	\end{equation} 
	and the characteristic line passing through $\left(\tilde{y},\tilde{\tau}\right)$
	\begin{equation} \label{characteristic line}
		\begin{cases}
			\frac{dy_{1}\left(\tau;\tilde{y},\tilde{\tau}\right)}{d\tau} = \mu \left(y_{1}\left(\tau;\tilde{y},\tilde{\tau}\right),\tau\right) , \\
			y_{1}\left(\tilde{\tau};\tilde{y},\tilde{\tau}\right) = \tilde{y}.
		\end{cases}
	\end{equation}
	Along the characteristic line, $\eqref{equation of Lagrangian transformation 2}_{2}$ is equivalent to 
	\begin{equation}\label{transport of v}
		Dv = 0, \\
	\end{equation}
	which leads to
	\begin{equation}\label{v=v0} 
		v\left(y,\tau\right) = v_{0}\left(y_{1}\left(0;y,\tau\right)\right).
	\end{equation}
	And, by $\eqref{equation of Lagrangian transformation 2}_{3}$, one could have
	\begin{equation}\label{Z=Z0}
		Z(y,\tau) = Z_{0}(y).
	\end{equation}
	By the expression of $p$ we have
	\begin{equation}\label{equation of p}
		p\left(g(y,\tau)\right) =\frac{Z(y,\tau)-v(y,\tau)}{\varepsilon^{2}} = \frac{Z_{0}(y)-v_{0}\left(y_{1}\left(0;y,\tau\right)\right)}{\varepsilon^{2}} .
	\end{equation}
	If $g(y,\tau)>0$
	\begin{equation}
		g = p^{-1}\left(p\left(g_{0}(y)\right) + \frac{v_{0}(y)-v\left(y,\tau\right)}{\varepsilon^{2}}\right) .
	\end{equation}
	\begin{remark}\label{single equation}
		If absence of vacuum ($g(y,\tau)>0$), the density $g$  are uniquely determined by the velocity field $v(y,\tau)$ and Lagrangian coordinate $y$. Consequently, the velocity field $v$ is governed by the transport equation:
		\begin{equation}\label{equation of single v}
			v_{\tau} + \mu\left(v,y\right) v_{y} = 0.
		\end{equation}
		For initial data containing jump, the coefficient $\mu (v,y)$ becomes discontinuous along $y$-direction. This necessitates estimating $v$ through time $\tau$-derivatives via Lagrangian evolution instead of spatial $y$-derivatives, thereby avoiding  the jumps. Since the $\varepsilon$-condition in this paper ensures that the density has a lower bound at jump. The structure of \eqref{equation of single v} is the basis of the regularity of the solution in this paper. Even in the presence of contact discontinuities, its structure enables deriving the appropriate gradient estimates.
		
	\end{remark}
	
	And in Lagrangian coordinates, $\varepsilon$-condition and $0$-condition equivalence to:
	\begin{itemize}
		\item $\varepsilon$-condition: If $g_{0}(y_{i}^{-})<g_{0}(y_{i}^{+})$ at jump point $y_{i}$,  for $y>y_{i}$,
		\begin{equation}\label{assumption1L}
			v_{0}(y_{i})+\varepsilon^{2}p\left(g_{0}\right)\left(y_{i}^{-}\right) > v_{0}\left(y\right).
		\end{equation}
		
		\item  $0$-condition: If $g_{0}(y_{i}^{-})<g_{0}(y_{i}^{+})$ at jump point $y_{i}$,  for $y>y_{i}$
		\begin{equation}\label{assumption2L}
			v_{0}\left(y_{i}\right) > y_{0}\left(y\right).
		\end{equation}
	\end{itemize}
	In the paper, we do not distinguish the $\varepsilon$-condition and $0$-condition in the Eulerian coordinates or Lagrangian coordinates.

	\subsection{The well-posedness and blow-up $C^{1}$ solutions}
	\ 
	\newline
	\indent
	In this section, we consider $v_{0}$, $Z_{0} \in C^{1}$, by \cite{Li Ta-Tsien Yu Wenci}, we could have the local existence of $C^{1}$ solution with $g(y,\tau)>0$ for \eqref{equation of Lagrangian transformation 2}-\eqref{new initial value}. 
	Next, we further consider the sharp life-span of $C^{1}$ solution of \eqref{AR} - \eqref{condition of p}. First, we have the following lemma.
	\begin{lemma}
		For the $C^{1}$ solution $\left(J^{\varepsilon},v^{\varepsilon},Z^{\varepsilon}\right)$ of \eqref{equation of Lagrangian transformation 2}-\eqref{new initial value},  
		$g^{\varepsilon}$ is upper bounded with respect to $\varepsilon$; $v^{\varepsilon}$ is uniformly bounded with respect to $\varepsilon$.
		
	\end{lemma}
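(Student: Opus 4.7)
The plan is to exploit the particularly clean structure of the Lagrangian system \eqref{equation of Lagrangian transformation 2}: the third equation reads $Z_\tau = 0$, so $Z$ is frozen along vertical lines in $(y,\tau)$-coordinates, while the second equation $v_\tau + \mu v_y = 0$ asserts that $v$ is transported along the characteristic \eqref{characteristic line}. These two $\varepsilon$-independent conservation statements will directly yield the two bounds, after using the relation $Z = v + \varepsilon^{2} p(g)$ to pass from a bound on $Z$ to a bound on $g$.

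\textbf{Bound on $v^\varepsilon$.} Integrating the second equation of \eqref{equation of Lagrangian transformation 2} along \eqref{characteristic line} gives
\begin{equation*}
v^\varepsilon(y,\tau) = v_{0}\bigl(y_{1}^{\varepsilon}(0;y,\tau)\bigr).
\end{equation*}
Hence on the strip $\mathbb{R}\times[0,\tau_{1})$ (or, if $v_{0}$ is only locally bounded, on each compact characteristic triangle on which the local $C^{1}$ theory of Li--Yu applies), one has $\|v^\varepsilon\|_{L^\infty}\le \|v_{0}\|_{L^\infty}$, a bound independent of $\varepsilon$. Geometrically, because the second family is linearly degenerate, $v$ is constant along the second characteristics, so no $\varepsilon$-sensitive interaction occurs.

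\textbf{Bound on $g^\varepsilon$.} The third equation integrates to $Z^\varepsilon(y,\tau)\equiv Z_{0}(y)=v_{0}(y)+\varepsilon^{2} p(g_{0}(y))$, so from $Z=v+\varepsilon^{2} p(g)$ one obtains
\begin{equation*}
\varepsilon^{2} p\bigl(g^\varepsilon(y,\tau)\bigr)=\varepsilon^{2} p(g_{0}(y))+v_{0}(y)-v^\varepsilon(y,\tau).
\end{equation*}
Combined with the previous step and the local boundedness of $v_{0}$ and $p(g_{0})$, this gives
\begin{equation*}
p\bigl(g^\varepsilon(y,\tau)\bigr)\le \|p(g_{0})\|_{L^\infty}+\frac{2\|v_{0}\|_{L^\infty}}{\varepsilon^{2}}.
\end{equation*}
Since $p$ is strictly increasing by \eqref{nonlinear condition}, inverting yields a finite upper bound on $g^\varepsilon$ for each fixed $\varepsilon>0$. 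The bound deteriorates as $\varepsilon\to 0$, which matches the phrasing ``upper bounded with respect to $\varepsilon$'' (contrasting with the \emph{uniform} bound on $v^\varepsilon$).

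The main, and essentially only, subtlety is bookkeeping of the $L^\infty$ norms: the argument needs $v_{0}$ and $p(g_{0})$ to be bounded on the domain under consideration, which is automatic on the compact characteristic triangle supplied by the local $C^{1}$ existence theorem. There is no deep analytical obstacle at this stage; the delicate question, namely a \emph{uniform lower} bound on $g^\varepsilon$ that prevents degeneracy of the coefficient $\mu = -\varepsilon^{2}g^{2}p'(g)/g_{0}$ as $\varepsilon\to 0$, is deferred to the subsequent sections where the level-set analysis is developed.
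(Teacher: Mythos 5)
Your proposal is correct and follows essentially the same route as the paper: integrate $Dv^\varepsilon = 0$ along the characteristic to get $\|v^\varepsilon\|_{L^\infty}\le\|v_0\|_{L^\infty}$, use $Z^\varepsilon_\tau=0$ together with $Z=v+\varepsilon^2 p(g)$ to bound $p(g^\varepsilon)$ by $\|p(g_0)\|_{L^\infty}+2\|v_0\|_{L^\infty}/\varepsilon^2$, and invert the monotone $p$. The only cosmetic difference is that you make the $\varepsilon$-dependence of the $g$-bound and its contrast with the $\varepsilon$-uniform $v$-bound more explicit, which the paper leaves implicit in its notation $C(\varepsilon)$.
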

	\begin{proof}
		By \eqref{v=v0},
		\begin{equation}
			\min_{y\in\mathbb{R}} v_{0}(y) \le v^{\varepsilon}(y,\tau)\le \max_{y\in\mathbb{R}}v_{0}(y).
		\end{equation}
		On the other hand, for $p\left(g^{\varepsilon}\right)$
		\begin{eqnarray}\label{upper bound of p}
			p\left(g^{\varepsilon}\left(y,\tau\right)\right) & = & \frac{Z^{\varepsilon}\left(y,\tau\right) -v^{\varepsilon}\left(y,\tau\right)}{\varepsilon^{2}} \nonumber \\
			& = & \frac{\varepsilon^{2}p\left(g_{0}\left(y\right)\right) +v_{0}\left(y\right) -v_{0}\left(y_{1}^{\varepsilon}\left(0;y,\tau\right)\right)}{\varepsilon^{2}} \nonumber \\
			& \le & \max_{y\in\mathbb{R}} p \left(g_{0}\right) + \frac{2\max_{y\in\mathbb{R}}v_{0}}{\varepsilon^{2}} \le C\left(\varepsilon\right).
		\end{eqnarray}
		Combining \eqref{condition of p}, $g^{\varepsilon}$ has an upper bound with respect to $\varepsilon$, denoted by $\bar{g}^{\varepsilon}$.
	\end{proof}

	We have the following lower bound estimate for density $g^{\varepsilon}$.
	\begin{proposition}\label{lower bound of C1 initial value}
		For the $C^{1}$ solutions $\left(J^{\varepsilon},v^{\varepsilon},Z^{\varepsilon}\right)$ of \eqref{equation of Lagrangian transformation 2}-\eqref{new initial value},  $g^{\varepsilon}$ has a uniform lower bound
		\begin{equation}\label{the density lower bound of C1 initial value}
			g^{\varepsilon}\left(y,\tau\right)\ge \frac{A_{1}}{1+A_{2}B\tau}.
		\end{equation}
		where $A_{1}=\min_{y\in\mathbb{R}} g_{0}$, $A_{2}=\max_{y\in\mathbb{R}} g_{0}$, $B = \max_{y\in\mathbb{R}} \frac{\left(\left(Z_{0}^{\varepsilon}\right)'\right)_+}{g_{0}} $ with $(f)_+=\max(f, 0)$.
	\end{proposition}
	\begin{proof}
		To estimate $g^{\varepsilon}$
		\begin{equation}
			p'\left(g^{\varepsilon}\right)Dg^{\varepsilon} = Dp\left(g^{\varepsilon}\right) = \frac{1}{\varepsilon^{2}}DZ^{\varepsilon} = \frac{1}{\varepsilon^{2}}\left(Z^{\varepsilon}_{\tau} + \mu^{\varepsilon}Z^{\varepsilon}_{y}\right)  = \frac{1}{\varepsilon^{2}} \mu^{\varepsilon}Z^{\varepsilon}_{y}= -g_{0}^{-1}\left(g^{\varepsilon}\right)^{2}p'\left(g^{\varepsilon}\right)\left(Z^{\varepsilon}_{0}\right)_{y}.
		\end{equation}
		Then, we have
		\begin{equation}
			Dg^{\varepsilon} = -g_{0}^{-1}\left(g^{\varepsilon}\right)^{2}\left(Z^{\varepsilon}_{0}\right)_{y}.
		\end{equation}
		Dividing both sides of the above equation by $\left(g^{\varepsilon}\right)^{2}$, by the definition of $B$, we have
		\begin{equation}\label{D(1/g)}
			D\left(\frac{1}{g^{\varepsilon}}\right) = \frac{\left(Z_{0}^{\varepsilon}\right)_{y}}{g_{0}} \le B.
		\end{equation}
		Integrating $s$ from 0 to $\tau$ along the characteristic line $y_{1}^{\varepsilon}\left(\tau;y,\tau\right)$
		\begin{equation}
			g^{\varepsilon}\left(y,\tau\right)\ge\frac{g_{0}\left(y^{\varepsilon}_{1}\left(0;y,\tau\right)\right)}{1+g_{0}\left(y^{\varepsilon}_{1}\left(0;y,\tau\right)\right)B\tau} \ge \frac{A_{1}}{1+A_{2}B\tau}.
		\end{equation}
	\end{proof}

	
	So for the $C^{1}$ initial data, we have the following proposition
	\begin{proposition}\label{life-span of C^1 solution}
		For fixed $\varepsilon>0$ and \eqref{AR} - \eqref{condition of p}, $\left(\rho_{0},u_{0}\right)\in \left(C^{1}\left(\mathbb{R}\right)\right)^{2}$, there exists a time $T_{b}^{\varepsilon}$ such that on $\mathbb{R}\times\left[0,T_{b}^{\varepsilon}\right)$:
		
		(1) If $\inf_{x\in\mathbb{R}} u_{0}'\left(x\right)\ge 0$, $T^{\varepsilon}_{b}=+\infty$, the solution exists globally, $(\rho^{\varepsilon},u^{\varepsilon} )\in(C^{1}\left(\mathbb{R}\times\left[0,+\infty\right)\right))^2$.
		
		(2) If $\inf_{x\in\mathbb{R}} u_{0}'\left(x\right)<0$, $T_{b}^{\varepsilon}$ is finite and there exists at least one $X_{b}^{\varepsilon}$ such that as $\left(x,t\right)\to \left(X_{b}^{\varepsilon},T_{b}^{\varepsilon}\right)$, $u_{x}^{\varepsilon}\left(x,t\right)\to -\infty$ while $\rho^{\varepsilon}\left(x,t\right)$ is upper and lower bounded. And the solution stands $(\rho^{\varepsilon},u^{\varepsilon} )\in(C^{1}\left(\mathbb{R}\times\left[0,T_{b}^{\varepsilon}\right)\right))^2$.
		
	\end{proposition}
	\begin{proof}
		Since the lower bound of density, by Remark \ref{single equation}, we just need to consider the gradient of $v^{\varepsilon}$.
		Taking $\partial_{\tau}$ on $\eqref{equation of Lagrangian transformation 2}_{2}$  leads to
		\begin{equation}\label{Dvtau}
			D\left(v^{\varepsilon}_{\tau}\right) = \left(v^{\varepsilon}_{\tau}\right)_{\tau}+\mu^{\varepsilon}\left(v^{\varepsilon}_{\tau}\right)_{y} =  - \mu^{\varepsilon}_{\tau} v^{\varepsilon}_{y}.
		\end{equation}
		To compute $\mu^{\varepsilon}_{\tau}v^{\varepsilon}_{y}$, we have
		\begin{equation}
			g^{\varepsilon}_{\tau}=-g^{\varepsilon}\left(J^{\varepsilon}\right)^{-1}v^{\varepsilon}_{y} = -\frac{\left(g^{\varepsilon}\right)^{2}v^{\varepsilon}_{y}}{g_{0}}
			= \frac{\left(g^{\varepsilon}\right)^{2}v^{\varepsilon}_{\tau}}{g_{0}\mu^{\varepsilon}}
			= -\frac{v^{\varepsilon}_{\tau}}{\varepsilon^{2}p'\left(g^{\varepsilon}\right)},
		\end{equation}
		which indicates
		\begin{eqnarray}\label{I(g)}
			\mu^{\varepsilon}_{\tau}v^{\varepsilon}_{y} & = & -\frac{\varepsilon^{2}}{g_{0}}\left(\left(g^{\varepsilon}\right)^{2}p'\left(g^{\varepsilon}\right)\right)_{\tau}v^{\varepsilon}_{y} \nonumber \\
			& = & -\frac{\varepsilon^{2}}{g_{0}}\left(2g^{\varepsilon}p'\left(g^{\varepsilon}\right)+\left(g^{\varepsilon}\right)^{2}p''\left(g^{\varepsilon}\right)\right)g^{\varepsilon}_{\tau}v^{\varepsilon}_{y} \nonumber \\
			& = & \frac{2g^{\varepsilon}{p}'\left(g^{\varepsilon}\right)+\left(g^{\varepsilon}\right)^{2}{p}'' \left ( g^{\varepsilon} \right ) }{\varepsilon^{2}\left(g^{\varepsilon}{p}'\left(g^{\varepsilon}\right)\right)^{2}}\left(v^{\varepsilon}_{\tau}\right)^{2}.
		\end{eqnarray}
		By $I\left(g\right):=\frac{2g{p}'\left(g\right)+\left(g\right)^{2}{p}'' \left ( g \right ) }{\left(g{p}'\left(g\right)\right)^{2}}$, one could get
		\begin{equation}\label{riccati}
			D\left(v^{\varepsilon}_{\tau}\right)=-\frac{I\left(g^{\varepsilon}\right)}{\varepsilon^{2}}\left(v^{\varepsilon}_{\tau}\right)^{2}.
		\end{equation}
		Since $D\left(v^{\varepsilon}_{\tau}\right)\le 0$, $v^{\varepsilon}_{\tau}$ is upper bounded: $v^{\varepsilon}_{\tau}(y,\tau)\le \max_{y\in \mathbb{R}}v^{\varepsilon}_{\tau}(y,0)$. Next, we focus on the lower bound of $v^{\varepsilon}_{\tau}$.
		Dividing both sides of the above formula by $\left(v^{\varepsilon}_{\tau}\right)^{2}$, we have
		\begin{equation}
			D\left(-\frac{1}{v^{\varepsilon}_{\tau}}\right)=
			D\left(-\frac{1}{\varepsilon^{2}g^{\varepsilon}p'\left(g^{\varepsilon}\right)\left(J^{\varepsilon}\right)^{-1}v^{\varepsilon}_{y}}\right) =
			-\frac{I\left(g^{\varepsilon}\right)}{\varepsilon^{2}}.
		\end{equation}
		Integrating $s$ from 0 to $\tau$ along the characteristic line $y_{1}^{\varepsilon}\left(\tau;\xi,0\right)$
		\begin{equation}\label{blow up term}
			\frac{1}{\left(g^{\varepsilon}p'\left(g^{\varepsilon}\right)\left(J^{\varepsilon}\right)^{-1} v^{\varepsilon}_{y}\right) \left(y_{1}^{\varepsilon}\left(\tau;\xi,0\right),\tau\right)}
			-\frac{1}{\left(g_{0}p'\left(g_{0}\right)v_{0}'\right)\left(\xi\right)}= \frac{1}{\varepsilon^{2}}
			\int_{0}^{\tau}I\left(g^{\varepsilon}\left(y_{1}^{\varepsilon}\left(s;\xi,0\right),s\right)\right)ds.
		\end{equation}
		By \eqref{blow up term}, we have
		\begin{equation}\label{blow-up of general pressure}
			\left(g^{\varepsilon}p'\left(g^{\varepsilon}\right)\left(J^{\varepsilon}\right)^{-1}v_{y}^{\varepsilon}\right)\left(y_{1}^{\varepsilon}\left(\tau;\xi,0\right),\tau\right) \\
			= \frac{\left(g_{0} {p}'\left(g_{0}\right)v_{0}'\right)\left(\xi\right)}
			{1+\left(g_{0}{p}'\left(g_{0}\right)v_{0}'\right)\left(\xi\right)\int_{0}^{\tau}I\left(g^{\varepsilon}\left(y_{1}^{\varepsilon}\left(s;\xi,0\right),s\right)\right)ds} .
		\end{equation}
		If $v_{0}'\left(\xi\right)\ge0$, by \eqref{blow-up of general pressure}, we have $\left(J^{\varepsilon}\right)^{-1}v_{y}^{\varepsilon}\left(y_{1}^{\varepsilon}\left(\tau;\xi,0\right),\tau\right)\ge 0$ for all $\tau\geq0$. If $v_{0}'\left(\xi\right)<0$, we need to estimate $\int_{0}^{\tau}I\left(g^{\varepsilon}\left(y_{1}^{\varepsilon}\left(s;\xi,0\right),s\right)\right)ds$. 
		By \eqref{upper bound of p} and \eqref{the density lower bound of C1 initial value}, for $g^{\varepsilon}$, we have
		\begin{equation}\label{}
			\frac{A_{1}}{1+A_{2}B\tau} \le g^{\varepsilon}\left(y,\tau\right)
			\le \bar{g}^{\varepsilon}.
		\end{equation}
		Then, since $p\left(g^{\varepsilon}\right)\in C^{2}\left(\mathbb{R}^{+}\right)$ , for $g^{\varepsilon}\in \left[\frac{A_{1}}{1+A_{2}B\tau},\bar{g}^{\varepsilon}\right]$, by \eqref{condition of p}, a positive lower bound $\underline{I}^{\varepsilon}$ such that
		\begin{equation}
			I\left(g^{\varepsilon}\right) \ge \underline{I}^{\varepsilon} >0.
		\end{equation}
		Then we have the following inequality:
		\begin{equation}
			\int_{0}^{\tau}I\left(g^{\varepsilon}\left(y_{1}^{\varepsilon}\left(s;\xi,0\right),s\right)\right)ds \ge
			\int_{0}^{\tau}\underline{I}^{\varepsilon} ds.
		\end{equation}
		Thus, if $v_{0}'\left(\xi\right)<0$, when $\tau$ increases from 0 to some $T_{b}^{\varepsilon}\left(\xi\right)$, we have
		\begin{equation}
			1 + \left(g_{0}{p}'\left(g_{0}\right)v_{0}'\right) \left(\xi\right)\int_{0}^{T_{b}^{\varepsilon}\left(\xi\right)}I\left(g^{\varepsilon}\left(y_{1}^{\varepsilon}\left(s;\xi,0\right),s\right)\right)ds=0,
		\end{equation}
		which indicate $\left(J^{\varepsilon}\right)^{-1}v_{y}^{\varepsilon}\to -\infty$ as $\tau\to T_{b}^{\varepsilon}(\xi)$.
		
		For $\xi$ run over all the points satisfying $v_{0}'\left(\xi\right)<0$, we could have the minimum life-span
		\begin{equation}
			T_{b}^{\varepsilon}:=\inf \left \{ T_{b}^{\varepsilon }\left ( \xi \right ) \right \} >0.
		\end{equation}
		
		In Lagrangian coordinates, for fixed $\varepsilon>0$, $g^{\varepsilon}$ is upper and lower bounded in $\left[ 0,T_{b}^{\varepsilon} \right)$, so the global solution exists if and only if, for $y\in \mathbb{R}$
		\begin{equation}
			v_{0}'\left(y\right)\ge 0.
		\end{equation}
		On the other side, if $v_{0}'\left(y\right)<0$, $\left(J^{\varepsilon}\right)^{-1}v_{y}^{\varepsilon}$ will goes to $-\infty$ in the finite time. Then, there exists at least one point $\left(Y_{b}^{\varepsilon},T_{b}^{\varepsilon}\right)$ such that on $\mathbb{R}\times \left[0,T_{b}^{\varepsilon}\right)$, as $\left(y,\tau\right)\to\left(Y_{b}^{\varepsilon},T_{b}^{\varepsilon}\right)$,
		\begin{equation}
			\left(J^{\varepsilon}\right)^{-1}v_{y}^{\varepsilon} \to -\infty.
		\end{equation}

		Next, we want to discuss the transformation of the solution between the Lagrangian coordinates and the Eulerian coordinates. For $y\in\left[-L,L\right]$, 
		there is a characteristic triangle 
		\begin{equation}
			\left\{\left(y,\tau\right)\mid -L\le y\le y^{\varepsilon}_{1}\left(s;L,0\right), 0\le s\le \tau \right\},
		\end{equation}
		where $y^{\varepsilon}_{1}\left(\tau;L,0\right) = -L$.
		And, in the characteristic triangle, each $(y,\tau)$ has a unique $(x,t)$ satisfying
		\begin{equation} \label{relationship between x and y}
			\begin{cases}
				\frac{d}{dt}x\left(t; y\right) = v^{\varepsilon}\left(y,t\right), \\
				x\left(0;y\right)=y.
			\end{cases}
		\end{equation}
		which could be expressed as
		\begin{equation}
			x(y,\tau) = y + \int_{0}^{\tau}v^{\varepsilon}(y,s)ds.
		\end{equation}
		We let $L\to+\infty$, for any $y\in\mathbb{R}$, each $(y,\tau)$ has a unique $(x,t)$ by \eqref{relationship between x and y}. According to the transformation of Eulerian coordinates and Lagrangian coordinates, we have
		\begin{equation} \label{u-transformation}
			\begin{cases}
				u_{x}^{\varepsilon} = \left(J^{\varepsilon}\right)^{-1}v_{y}^{\varepsilon} ,\\
				u_{t}^{\varepsilon} = -\left(v^{\varepsilon} -\varepsilon^{2}g^{\varepsilon}{p}' \left(g^{\varepsilon}\right)\right)\left(J^{\varepsilon}\right)^{-1} v_{y}^{\varepsilon}.
			\end{cases}
		\end{equation}
		And, for $\rho^{\varepsilon}$ we have
		\begin{equation}\label{rho-transformation}
			\begin{cases}
				\rho_{x}^{\varepsilon} = \left(J^{\varepsilon}\right)^{-1}g_{y}^{\varepsilon}
				= \left(J^{\varepsilon}\right)^{-1}\frac{v'_{0}+\varepsilon^{2}p'\left(g_{0}\right)g'_{0}-v_{y}^{\varepsilon}}{\varepsilon^{2}p'\left(g^{\varepsilon}\right)},\\
				\rho_{t}^{\varepsilon} = -v^{\varepsilon}\left(J^{\varepsilon}\right)^{-1}g_{y}^{\varepsilon} - g^{\varepsilon} \left(J^{\varepsilon}\right)^{-1} v_{y}^{\varepsilon}.
			\end{cases}
		\end{equation}
		
		In Eulerian coordinates, for fixed $\varepsilon>0$, $u^{\varepsilon}$ and $\rho^{\varepsilon}$ is upper and lower bounded in $\left[ 0,T_{b}^{\varepsilon} \right)$, so the global solution exists if and only if, for $x\in \mathbb{R}$
		\begin{equation}
			u_{0}'\left(x\right)\ge 0.
		\end{equation}
		On the contrary, if $u_{0}'\left(x\right)<0$, $u_{x}^{\varepsilon}$ will goes to $-\infty$ in the finite time. Then, respecting to $\left(Y_{b}^{\varepsilon},T_{b}^{\varepsilon}\right)$, there exists  $\left(X_{b}^{\varepsilon},T_{b}^{\varepsilon}\right)$ such that on $\mathbb{R}\times \left[0,T_{b}^{\varepsilon}\right)$, as $\left(x,t\right)\to\left(X_{b}^{\varepsilon},T_{b}^{\varepsilon}\right)$,  $u_{x}^{\varepsilon}\left(x,t\right) \to -\infty$.
		
		Further, modulus of continuity estimates of the solution see \cite{Li Ta-Tsien Yu Wenci}.
	\end{proof}

	\section{The well-posedness contact discontinuous solution}
	\indent
	In this section we consider the equation $\eqref{AR}$ with piecewise Lipshcitzs initial data \eqref{new initial value}, where $v_0\in Lip\left(\mathbb{R}\right)$ and $Z_{0} \in \Lip_s\left(\mathbb{R}\right)$. 
	Without loss of generality, we can assume that $Z_0$
	only has a jump discontinuity at $y=0$.

	\subsection{The lower bound of density }
	First, we would to clarify the influence of the jump.  Here, we denote the characteristic line starting from $y=0$ as $y_1^\varepsilon(\tau)$, which is defined in \eqref{characteristic line}. And the characteristic line leading back from $\left(\tilde{y},\tilde{\tau}\right)$ to the initial data is denoted as $y_{1}^{\varepsilon}\left(s;\tilde{y},\tilde{\tau}\right)$. 
	For whether the characteristic line $y_{1}^{\varepsilon}\left(s;\tilde{y},\tilde{\tau}\right)$ crosses the discontinuity line or not, we can divide $\Omega:=\mathbb{R}\times\left[0,+\infty\right)$ into the following regions: 
	\begin{equation}
		\Omega = \Omega_{+}\cup \Omega^{\varepsilon}_{\rm{I}}\cup\Omega^{\varepsilon}_{\amalg }\cup\left\{y=0\right\},
	\end{equation}
	where
	\begin{equation*}
		\Omega_{+}=\left\{\left(y,\tau\right)\mid y>0, \tau\ge0\right\},\,\,	\Omega^{\varepsilon}_{\rm{I}}=\left\{\left(y,\tau\right)\mid y_{1}^{\varepsilon}\left(\tau\right)\le y < 0, \tau\ge0\right\}, \,\,\mbox{and}\,\, \Omega^{\varepsilon}_{\amalg}= \left\{\left(y,\tau\right)\mid y<y_{1}^{\varepsilon}\left(\tau\right), \tau\ge0\right\}.
	\end{equation*}
	For $\left(\tilde{y},\tilde{\tau}\right) \in \Omega_{+}\cup\Omega^{\varepsilon}_{\amalg }$, $y_{1}^{\varepsilon}\left(s;\tilde{y},\tilde{\tau}\right)$ does not cross the discontinuity line. 
	From the proof of Proposition 2.4, we can obtain the local existence of Lipschitz solutions in the characteristic triangle of domain $\Omega_{+}\cup \Omega^{\varepsilon}_{\amalg }$. For $\left(\tilde{y},\tilde{\tau}\right) \in \Omega^{\varepsilon}_{\rm{I}}$, its backward characteristic line $y_{1}^{\varepsilon}\left(s;\tilde{y},\tilde{\tau}\right)$ must reach $\left(0,\tau_{0}\right)$. So for $v\left(\tilde{y},\tilde{\tau}\right)$ and $Z^{\varepsilon}\left(\tilde{y},\tilde{\tau}\right)$ we have
	\begin{equation}
		v\left(\tilde{y},\tilde{\tau}\right) = v\left(0,\tau_{0}\right), \quad Z^{\varepsilon}\left(\tilde{y},\tilde{\tau}\right) = Z_{0}^{\varepsilon}\left(\tilde{y}\right).
	\end{equation}
	Therefore, if $g>0$, then $g$ in $\Omega^{\varepsilon}_{\rm{I}}$ can be expressed as 
	\begin{equation}
		g^{\varepsilon}\left(\tilde{y},\tilde{\tau}\right) = p^{-1}\left(\frac{Z_{0}^{\varepsilon}\left(\tilde{y}\right) - v\left(0,\tau_{0}\right) }{\varepsilon^{2}}\right).
	\end{equation}
	
	Without loss of generality, for different jump cases where $Z_{0}^{\varepsilon}$ has only one discontinuity at $y=0$, we discuss the lower bound estimate of density for different regions.
	The key point is based on the results of Proposition \ref{lower bound of C1 initial value}, we have the following proposition.
	\begin{proposition}\label{lower bound of rho in Lip case}
		For fixed $\varepsilon>0$ and the solutions $\left(J^{\varepsilon},v^{\varepsilon}, Z^{\varepsilon}\right)$ of \eqref{equation of Lagrangian transformation 2}-\eqref{new initial value}: 
		
		\noindent 		Case 1: $\lim_{g \to 0}p\left(g\right) = 0$ and the initial data satisfies the $\varepsilon$-condition;
		
		\noindent 		Case 2: $\lim_{g \to 0}p\left(g\right) = -\infty$ ( $\varepsilon$-condition is not required );
		
		\noindent 		then $g^{\varepsilon}$ has a lower bound with respect to $\varepsilon$.
	\end{proposition}
	\begin{proof}
		1. Region $\Omega_{+}\cup \Omega^{\varepsilon}_{\amalg }$: For any point $\left(\tilde{y},\tilde{\tau}\right)$ in $\Omega_{+}\cup\Omega^{\varepsilon}_{\amalg }$, by the method in Proposition \ref{lower bound of C1 initial value}, we can get the uniform lower bound estimate of density: 
		\begin{equation}\label{lower bound in region 1}
			g^{\varepsilon}\left(y,\tau\right)\ge \frac{A_{1}}{1+A_{2}B\tau}.
		\end{equation}
		where $A_{1}=\min_{y\in\mathbb{R}} g_{0}$, $A_{2}=\max_{y\in\mathbb{R}} g_{0}$, and $B$ can be defined as
		\begin{equation}
			B = \frac{\left(\Lip\left(Z_{0}^{\varepsilon}\right)\right)_{+}}{g_{0}},
		\end{equation}
		here $\left(Z_{0}^{\varepsilon}\right)_{C}=Z_0^\varepsilon-\left(Z_{0}^{\varepsilon}\right)_{J}$ is the absolute continuous part of $Z_0$, and $\left(\Lip(Z^\varepsilon_0)\right)_{+}$ is the Lipschitzs constant of $\left(Z_{0}^{\varepsilon}\right)_{C}$ without decreasing.
		
		2. The discontinuous curve $y=0$: If $g_{0}(0^{-})>g_{0}(0^{+})$ at $y=0$, since $v_{0}$ is continuous, this means $Z_{0}(0^{-})>Z_{0}(0^{+})$. By \eqref{D(1/g)}, similar to the case in $\Omega_{+}\cup \Omega^{\varepsilon}_{\amalg }$, there exists a constant $B$ such that the density has a lower bound \eqref{lower bound in region 1}.
		
		
		Otherwise, if $g_{0}(0^{-})<g_{0}(0^{+})$, which means $Z_{0}(0^{-})<Z_{0}(0^{+})$, for $(0, \tilde{\tau})$ and 
		$p\left(g^{\varepsilon}\left(0^{-},\tilde{\tau}\right)\right)$, we have
		\begin{equation}\label{The discontinuous curve case}
			p\left(g^{\varepsilon}\left(0^{-},\tilde{\tau}\right)\right) = \frac{Z^{\varepsilon}_{0}\left(0^{-}\right)-v_{0}\left(y_{1}^{\varepsilon}\left(0;0,\tilde{\tau}\right)\right)}{\varepsilon^{2}}.
		\end{equation}
		For \eqref{The discontinuous curve case}, we need to discuss the following two cases: 
		
		Case 1: $\lim_{g \to 0}p\left(g\right) = 0$ and the initial data satisfies the $\varepsilon$-condition. By \eqref{assumption1}, there exists a constant $A^{\varepsilon}$ which may depend on $\varepsilon$ such that
		\begin{equation}\label{lower bound on discontinuous line}
			p\left(g^{\varepsilon}\left(0^{-},\tilde{\tau}\right)\right) \ge A^{\varepsilon}> 0.
		\end{equation}
		So we have
		\begin{equation}\label{lower bound of rho}
			g^{\varepsilon}\left(0^{+},\tilde{\tau}\right) > g^{\varepsilon}\left(0^{-},\tilde{\tau}\right) \ge p^{-1}\left(A^{\varepsilon}\right) >0.
		\end{equation}
		
		Case 2: $\lim_{g \to 0}p\left(g\right) = -\infty$. By \eqref{The discontinuous curve case}, for $p(g)$ we have
		
		\begin{equation}\label{}
			p\left(g^{\varepsilon}\left(0^{-},\tilde{\tau}\right)\right) = p\left(g_{0}(0^{-})\right) + \frac{v_{0}(0)-v_{0}\left(y_{1}^{\varepsilon}\left(0;0,\tilde{\tau}\right)\right)}{\varepsilon^{2}}
			\ge p\left(\underline{g_{0}}\right) -  \frac{2 \max_{y\in\mathbb{R}} \left |  v_{0} (y)\right | }{\varepsilon^{2}} > - \infty.
		\end{equation}
		By \eqref{condition of p}, $g^{\varepsilon}\left(0^{-},\tilde{\tau}\right)$ has a lower bound with respect to $\varepsilon$.

		3. Region $\Omega^{\varepsilon}_{\rm{I}}$: For any point $\left(\tilde{y},\tilde{\tau}\right) \in \Omega^{\varepsilon
		}_{\rm{I}}$, its backward characteristic line must reach $\left(0,\tau_{0}\right)$.
		\begin{equation}
			y_{1}^{\varepsilon} \left( \tau_{0};\tilde{y},\tilde{\tau}\right) = 0.
		\end{equation}
		by \eqref{D(1/g)} we have
		\begin{equation}\label{}
			D\left(\frac{1}{g^{\varepsilon}}\right) = \frac{\left(Z_{0}^{\varepsilon}\right)_{y}}{g_{0}}.
		\end{equation}
		Integrating $s$ from $\tau_{0}$ to $\tilde{\tau}$ along the characteristic line $y_{1}^{\varepsilon}\left(\tau;\tilde{y},\tilde{\tau}\right)$, there exists a constant $B$ such that 
		\begin{equation}
			B = \frac{\left(\Lip\left(Z_{0}^{\varepsilon}\right)\right)_{+}}{g_{0}}.
		\end{equation}
		such that
		\begin{equation}
			\frac{1}{g^{\varepsilon}\left(\tilde{y},\tilde{\tau}\right)} -
			\frac{1}{\min_{0\leq\tau_0\leq\tilde{\tau}} g^{\varepsilon}\left(0^{-},\tau_{0}\right)} \le
			\frac{1}{g^{\varepsilon}\left(\tilde{y},\tilde{\tau}\right)} -
			\frac{1}{g^{\varepsilon}\left(0^{-},\tau_{0}\right)}
			= \int_{\tau_{0}}^{\tilde{\tau}} \frac{\left(\Lip\left(Z_{0}^{\varepsilon}\right)\right)_{+}}{g_{0}}ds \le B\left(\tilde{\tau}-\tau_{0}\right).
		\end{equation}
		So for fixed $\varepsilon$, we have
		\begin{equation}
			g^{\varepsilon}\left(\tilde{y},\tau_{0}\right) \ge \frac{A^{\varepsilon}_{1}}{1+A^{\varepsilon}_{1}B\tau}.
		\end{equation}
		where $A^{\varepsilon}_{1}=\min_{0\leq\tau_0\leq\tilde{\tau}} g^{\varepsilon}\left(0^{-},\tau_{0}\right)$, $B =\frac{\left(\Lip\left(Z_{0}^{\varepsilon}\right)\right)_{+}}{g_{0}}$.
		Combining the three cases, we obtain that the density has a uniform lower bound in $\Omega$ under the Lagrangian coordinate, denoted by $\underline{g}^{\varepsilon}$.
		\begin{equation}
			\underline{g}^{\varepsilon}: = \min\left\{\frac{A_{1}}{1+A_{2}B\tau}, p^{-1}\left(A^{\varepsilon}\right), \frac{A^{\varepsilon}_{1}}{1+A^{\varepsilon}_{1}B\tau}\right\}.
		\end{equation}

	\end{proof}
	\begin{remark}
		For $\lim_{g \to 0}p\left(g\right) = 0$, if the assumption in \eqref{assumption1} becomes an equality, there exists at least one point $y_{0}>0$ such that
		\begin{equation}
			Z^{\varepsilon}_{0}\left(0^{-}\right) = v_{0}\left(y_{0}\right).
		\end{equation}
		Then there exists a time $T^{\varepsilon}_{y_{0}}$ such that
		\begin{equation}
			y_{1}^{\varepsilon}\left(T^{\varepsilon}_{y_{0}};y_{0},0\right) = 0.
		\end{equation}
		So we have
		\begin{equation}
			p\left(g^{\varepsilon}\left(0^{-},T^{\varepsilon}_{y_{0}}\right)\right) =
			\frac{Z^{\varepsilon}\left(0^{-},T^{\varepsilon}_{y_{0}}\right) - v\left(y_{1}^{\varepsilon}\left(T^{\varepsilon}_{y_{0}};y_{0},0\right),T^{\varepsilon}_{y_{0}}\right)}{\varepsilon^{2}} = \frac{ Z^{\varepsilon}_{0}\left(0^{-}\right)-v_{0}\left(y_{0}\right) }{\varepsilon^{2}} = 0.
		\end{equation}
		This means there exists a finite time $T^{\varepsilon}_{y_{0}}$ such that
		\begin{equation}
			g^{\varepsilon} \left(0^{-},T^{\varepsilon}_{y_{0}}\right)= 0.
		\end{equation}
		Based on the above discussion, \eqref{assumption1} is a necessary and sufficient condition for the absence of vacuum.
	\end{remark}
	
	\begin{remark}
		For the case of $\lim_{g \to 0}p\left(g\right) = -\infty$, we do not need to use $\varepsilon$-condition to get the lower bound of density.
	\end{remark}

	\subsection{The well-posedness of Aw-Rascle model}
	Based on the above discussion, we prove Theorem \ref{theorem 1}.

	\begin{proof}

		By \eqref{blow-up of general pressure}, in Eulerian coordinates, we have
		\begin{equation}\label{blow up of Lip case}
			u_{x}^{\varepsilon}\left(x_{1}^{\varepsilon}\left(t;\eta ,0\right),t\right) \\
			= \frac{1}{\rho^{\varepsilon}p'\left(\rho^{\varepsilon}\right)\left(x_{1}^{\varepsilon}\left(t;\eta,0\right),t\right)} \cdot \frac{\left(\rho_{0} p'\left(\rho_{0}\right) u_{0}'\right) \left(\eta\right)}
			{1+\left(\rho_{0}p'\left(\rho_{0}\right) u_{0}'\right)\left(\eta\right)\int_{0}^{t}I \left(\rho^{\varepsilon}\left(x_{1}^{\varepsilon}\left(s;\eta,0\right),s\right)\right)ds}.
		\end{equation}
		If $u'_{0}\left(\eta\right)\ge0$, by \eqref{blow up of Lip case}, we have $u_{x}^{\varepsilon}\left(x_{1}^{\varepsilon}\left(t;\eta,0\right),t\right)\ge 0$ for all $t\geq0$. If $u_{0}'\left(\eta\right)<0$, we need to estimate $\int_{0}^{\tau}I\left(\rho^{\varepsilon}\left(x_{1}^{\varepsilon}\left(s;\eta,0\right),s\right)\right)ds$. 
		By Proposition \ref{lower bound of rho in Lip case} and \eqref{upper bound of p}, for $\rho^{\varepsilon}$, we have
		\begin{equation}\label{}
			\underline{\rho}^{\varepsilon} \le \rho^{\varepsilon}\left(x_{1}^{\varepsilon}\left(t;\eta,0\right),t\right)
			\le \bar{\rho}^{\varepsilon}.
		\end{equation}
		Then, since $p\left(\rho^{\varepsilon}\right)\in C^{2}\left(\mathbb{R}^{+}\right)$ , for $\rho^{\varepsilon}\in \left[\underline{\rho}^{\varepsilon},\bar{\rho}^{\varepsilon}\right]$, $I\left(\rho^{\varepsilon}\right)$ has a lower bound $\underline{I}^{\varepsilon}$ such that
		\begin{equation}
			I\left(\rho^{\varepsilon}\right) \ge \underline{I}^{\varepsilon} .
		\end{equation}
		Then we have the following inequality:
		\begin{equation}
			\int_{0}^{t}I\left(\rho^{\varepsilon}\left(x_{1}^{\varepsilon}\left(s;\eta,0\right),s\right)\right)ds \ge
			\int_{0}^{t}\underline{I}^{\varepsilon} ds.
		\end{equation}
		
		Thus, if $u_{0}'\left(\eta\right)<0$, when $t$ increases from 0 to some $T_{b}^{\varepsilon}\left(\eta\right)$, we have
		\begin{equation}
			1 + \left(\rho_{0}{p}'\left(\rho_{0}\right) u_{0}'\right) \left(\eta\right)\int_{0}^{T_{b}^{\varepsilon}\left(\eta\right)}I\left(\rho^{\varepsilon}\left(x_{1}^{\varepsilon}\left(s;\eta,0\right),s\right)\right)ds=0,
		\end{equation}
		which indicate $\left(J^{\varepsilon}\right)^{-1}v_{y}^{\varepsilon}\to -\infty$ as $\tau\to T_{b}^{\varepsilon}(\eta)$.
		
		To consider $\eta\in\mathbb{R}$, we could introduce the minimum life-span
		\begin{equation}
			T_{b}^{\varepsilon}:=\inf \left \{ T_{b}^{\varepsilon }\left ( \eta \right ) \right \} >0,
		\end{equation}
		where $\eta$ run over all the points satisfying $u_{0}'\left(\eta\right)<0$.
		In Eulerian coordinates, for fixed $\varepsilon>0$, $\rho^{\varepsilon}$ is upper and lower bounded in $\left[ 0,T_{b}^{\varepsilon} \right)$, so the global solution exists if and only if, for $x\in \mathbb{R}$
		\begin{equation}
			u_{0}'\left(x\right)\ge 0.
		\end{equation}
		On the other side, if $u_{0}'\left(x\right)<0$, $u_{x}^{\varepsilon}$ will goes to $-\infty$ in the finite time. Then,  there exists  $\left(X_{b}^{\varepsilon},T_{b}^{\varepsilon}\right)$ such that on $\mathbb{R}\times \left[0,T_{b}^{\varepsilon}\right)$, as $\left(x,t\right)\to\left(X_{b}^{\varepsilon},T_{b}^{\varepsilon}\right)$, $u_{x}^{\varepsilon}\left(x,t\right) \to -\infty$. Therefore, by \eqref{u-transformation} and \eqref{rho-transformation}, $\rho^{\varepsilon}$ and $u^{\varepsilon}$ satisfy
		
		\begin{equation}
			\rho^{\varepsilon}\in \Lip_s\left(\mathbb{R}\times\left[0,T_{b}^{\varepsilon}\right)\right),\quad u^{\varepsilon}\in  \Lip \left(\mathbb{R}\times\left[0,T_{b}^{\varepsilon}\right)\right).
		\end{equation}

		Next, we consider the regularity of discontinuous line. The discontinuous line in Eulerian coordinates is
		\begin{equation}
			\frac{dx_{2}^{\varepsilon}\left(t\right)}{dt}=u^{\varepsilon}\left(x_{2}^{\varepsilon}\left(t\right),t\right),
		\end{equation}
		which has the following implicit expression
		\begin{equation}\label{discontinuous line}
			x_{2}^{\varepsilon}\left(t\right) = x_{0} + \int_{0}^{t} u^{\varepsilon} \left(x_{2}^{\varepsilon}\left(s\right),s\right) ds.
		\end{equation}
		Therefore, by \eqref{discontinuous line}, for $h>0$, $t\ge 0$, we have
		\begin{eqnarray}
			\left | \left(x_{2}^{\varepsilon}\right)'\left(t+h\right) -\left(x_{2}^{\varepsilon}\right)'\left(t\right)\right | & = & \left | u^{\varepsilon}\left(x_{2}^{\varepsilon}\left(t+h\right),t+h\right) - u^{\varepsilon}\left(x_{2}^{\varepsilon}\left(t\right),t\right)\right | \nonumber \\
			& \le & \Lip(u^{\varepsilon})\left(\left | x_{2}^{\varepsilon}\left(t+h\right) - x_{2}^{\varepsilon}\left(t\right) \right | + h \right) \nonumber \\
			& \le & \Lip(u^{\varepsilon})\left( \left \| u \right \| _{L^{\infty}} + 1 \right)h.
		\end{eqnarray}
		So $\left(x^{\varepsilon}\right)'$ is a Lipschitz function with respect to $t$.

		
	\end{proof}
	
	\begin{remark}
		We have proved the well-posedness of the contacted discontinuity solution where $\rho_0$ has one contact discontinuity. For a general piecewise Lipschitz functions function $\rho_0$, we can follow the above idea. Since the discontinuity points where $[\rho_0]>0$ are separable, we can repeat the above procedure for each characteristic triangle containing only one such discontinuity. Then we can glue the fragments together to establish local existence. By repeating this procedure, we can extend the solution's existence time forward until a nonlinear singularity appears. Therefore we proved the well-posedness for general piecewise Lipschitz functions $\rho_0$.
	\end{remark}

	For fixed $\varepsilon>0$, if the initial data are $\rho_{0},u_{0}\in C^{1}$, by the method in \cite{Li Ta-Tsien Yu Wenci}, for a point $\left(x,t\right)$ in the domain $\mathbb{R}\times\left[0,T_{b}^{\varepsilon}\right)$, we can obtain the uniform modulus of continuity estimation of $u_{x}$ and $u_{t}$, and then obtain the existence of $C^{1}$ solution. 
	However, for the case where there is a discontinuity, since the backward characteristic line of the point in the region $\Omega_{+}\cup \Omega^{\varepsilon}_{\amalg }$ does not pass through the discontinuity, the regularity of the solution in the region can be improved to $C^{1}$ by the uniform modulus of continuity estimation. However, the backward characteristic line of the points in the region $\Omega^{\varepsilon}_{\rm{I}}$ will pass through the discontinuity, we cannot obtain the uniform modulus of continuity estimation of $u_{x}$ and $u_{t}$ along the spatial direction, so the Lipschitz regularity is optimal.

	\subsection{Pressureless fluid case}
	The blow-up of pressureless fluid in Eulerian coordinates can refer to \cite{Peng Wenjian Wang Tian-Yi}, and we have similar argument in Lagrangian coordinates.
	For a smooth solution $\left(\bar{\rho},\bar{u}\right)$ of \eqref{pressureless Euler}, $\eqref{pressureless Euler}_{2}$ is equivalent to Burgers’ equation
	\begin{equation}
		\bar{u}_{t} + \bar{u}\bar{u}_{x} = 0.
	\end{equation}
	For pressureless fluid, we introduce the following derivative
	\begin{equation}
		D_{0} = \partial_{t} + \bar{u} \partial_{x}.
	\end{equation}
	And the characteristic line passing through $\left(\tilde{x},\tilde{t}\right)$ is defined as
	\begin{equation} \label{bar x}
		\begin{cases}
			\frac{d\bar{x}\left(t;\tilde{x},\tilde{t}\right)}{dt} = \bar{u} \left(\bar{x}\left(t;\tilde{x},\tilde{t}\right),t\right) , \\
			\bar{x}\left(\tilde{t};\tilde{x},\tilde{t}\right) = \tilde{x}.
		\end{cases}
	\end{equation}

	Under the Lagrangian transformation, the equation \eqref{pressureless Euler} is equivalent to
	\begin{equation}\label{Lagrangian transformation for pressureless}
		\begin{cases}
			\bar{J}_{\tau} = \bar{v}_{y},\\
			\bar{v}_{\tau}=0.
		\end{cases}
	\end{equation}
	and the respective initial data are
	\begin{equation}\label{new initial value for pressureless}
		\left ( \bar{J}, \bar{v} \right ) |_{\tau=0} =\left ( 1, v_{0}\left(y\right) \right ).
	\end{equation} 
	
	Next, for the smooth solution $\left(\bar{J},\bar{v}\right)$ of \eqref{Lagrangian transformation for pressureless}.
	By $\eqref{Lagrangian transformation for pressureless}_{2}$ we have
	\begin{equation}\label{bar v and v_y}
		\bar{v}\left(y,\tau\right) = v_{0}\left(y\right) \,\,\,{\rm and}\,\,\,\bar{v}_y(y,\tau)=v_0'(y).
	\end{equation}
	Integrating the first equation in \eqref{Lagrangian transformation for pressureless}, we have
	\begin{equation}\label{bar J}
		\bar{J} \left(y,\tau\right)=1+v_{0}'\left(y\right)\tau.
	\end{equation}
	Combining \eqref{J},
	\begin{equation}\label{bar rho}
		\bar{g} \left(y,\tau\right)=\frac{g_0(y)}{J(y,\tau)}=\frac{g_{0} \left(y\right)}{1+v_{0}'\left(y\right)\tau}.
	\end{equation}
	Then, by \eqref{bar v and v_y} and \eqref{bar J}, one could get
	\begin{equation}\label{bar v_y}
		\bar{J}^{-1}\bar{v}_{y}\left(y,\tau\right)= \frac{v_{0}'\left(y\right)}{1+v_{0}'\left(y\right)\tau}.
	\end{equation}
	According to \eqref{bar v_y}, 
	if $v_{0}'\left(y\right)<0$, when $\tau$ increases from 0 to some $T_{b}\left(y\right)$, such that
	\begin{equation}
		1+v_{0}'\left(y\right)T_{b}\left(y\right)=0,
	\end{equation}
	which indicate $\bar{J}^{-1}\bar{v}_{y}\to -\infty$ and $\bar{g}\to+\infty$ as $\tau\to T_{b}(y)$.
	
	For $y$ run over all the points such that $v_{0}'\left(y\right)<0$, we could have the minimum life-span
	\begin{equation}\label{Tb}
		T_{b}:=\inf \left \{ T_{b}\left ( y \right ) \right \} = \inf \left \{ -\frac{1}{v'_{0}\left(y\right)} \right \} >0.
	\end{equation}
	
	Thus, from \eqref{bar v_y}, we see that singularity of $\bar{v}_{y}$ first happens at $\tau=T_{b}$. When $\tau \to T_{b}$, there is a point such that $\bar{g}$ goes to $+\infty$, which corresponds to the mass concentration.
	
	According to the transformation of Eulerian coordinates and Lagrangian coordinates, we have
	\begin{equation} \label{}
		\begin{cases}
			\bar{u}_{x} = \bar{J}^{-1}\bar{v}_{y} ,\\
			\bar{u}_{t} = -\bar{v} \bar{J}^{-1}\bar{v}_{y}.
		\end{cases}
	\end{equation}
	Therefore, in Eulerian coordinates, we have
	\begin{equation}
		\bar{\rho} \left(\bar{x}\left(t;x,0\right),t\right)=\frac{\rho_{0} \left(x\right)}{1+ u_{0}'\left(x\right)t},
	\end{equation}
	and
	\begin{equation}\label{}
		\bar{u}_{x}\left(\bar{x}\left(t;x,0\right),t\right)= \frac{u_{0}'\left(x\right)}{1+ u_{0}'\left(x\right)t}.
	\end{equation}
	According to the above discussion, for $t\in \left[0,T_{b}\right)$, if 
	\begin{equation}\label{initial value of pressureless fluid}
		\rho_{0}\in Lip_{s}\left(\mathbb{R}\right),\quad u_{0}\in Lip \left(\mathbb{R}\right),
	\end{equation}
	then $\bar{\rho}\in Lip_{s}\left(\mathbb{R}\times\left[0,T_{b}\right)\right)$, $\bar{u}\in Lip \left(\mathbb{R}\times\left[0,T_{b}\right)\right)$.

	Based on the above discussion, for the case of pressureless fluid, similar to Theorem \ref{theorem 1}, we have the following proposition. 
	\begin{proposition}
		For \eqref{pressureless Euler} with initial data \eqref{initial value of pressureless fluid}, there exists a time $T_{b}$ such that on $\mathbb{R}\times\left[0,T_{b}\right)$, there exists a solution $\left(\bar{\rho},\bar{u}\right)$ satisfying 
		
		(1) If $\inf_{x\in\mathbb{R}} u_{0}'(x)\ge 0$, the solution exists globally. For $\bar{\rho}$ and $\bar{u}$, there are
		\begin{equation}
			\bar{\rho}\in Lip_s \left(\mathbb{R}\times\left[0,+\infty\right) \right),\quad
			\bar{u}\in Lip \left(\mathbb{R}\times\left[0,+\infty\right)\right) .
		\end{equation}
		
		(2) If $\inf_{x\in\mathbb{R}} u_{0}'\left(x\right)<0$, there exists a finite $T_{b}$ and at least one $X_{b}$ such that as $\left(x,t\right)\to \left(X_{b},T_{b}\right)$, $\bar{u}_{x}\left(x,t\right)\to -\infty$ and $\bar{\rho}\left(x,t\right)\to +\infty$. And the solution stands
		\begin{equation}
			\bar{\rho}\in Lip_s \left(\mathbb{R}\times\left[0,T_{b}\right) \right),\quad 
			\bar{u}\in Lip \left(\mathbb{R}\times\left[0,T_{b}\right)\right).
		\end{equation}
		And, 	$\bar{\Gamma} = \left \{ \left ( x,t \right ) \in \mathbb{R}\times\left[0,T_{b}\right) : x=\bar{x}\left(t\right) \right \} $ is the discontinuous curve and $\bar{x}'\left(t\right)$ is a Lipschitz function with respect to $t$, satisfying
		\begin{equation}
			\frac{d\bar{x}\left(t\right)}{dt} = \bar{u}\left(\bar{x}\left(t\right),t\right) .
		\end{equation}
	\end{proposition}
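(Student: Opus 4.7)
The approach is to parallel the proof of Theorem \ref{main theorem 1}, with substantial simplifications because the Lagrangian system \eqref{Lagrangian transformation for pressureless} admits closed-form solutions. Specifically, I would mollify the $BV_s$ initial density $\rho_0$ exactly as in Section 3 to obtain a sequence $g_0^\sigma \in C^1$ with $g_0^\sigma \to g_0$ a.e.; the initial velocity need not be regularized since $u_0$ is already Lipschitz. For each $\sigma>0$ the explicit formulas already derived give
\begin{equation*}
\bar v^\sigma(y,\tau) = v_0(y), \quad \bar J^\sigma(y,\tau) = 1 + (v_0)_y(y)\tau, \quad \bar g^\sigma = \frac{g_0^\sigma}{\bar J^\sigma}, \quad (\bar J^\sigma)^{-1}\bar v^\sigma_y = \frac{(v_0)_y}{1+(v_0)_y\tau}.
\end{equation*}
Since $T_b$ in \eqref{Tb} depends only on $v_0$, it is independent of $\sigma$, and on any slab $\mathbb{R}\times[0, T_b - \delta]$ the quantities $\bar J^\sigma$, $(\bar J^\sigma)^{-1}\bar v^\sigma_y$, and $\bar v^\sigma$ are uniformly bounded, while $\bar g^\sigma$ inherits uniform upper and lower bounds from $g_0^\sigma$.

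For the passage to the limit, I would transfer these bounds back to Eulerian coordinates via $\bar u_x = \bar J^{-1}\bar v_y$ and $\bar u_t = -\bar v\, \bar J^{-1}\bar v_y$; the resulting uniform Lipschitz bound on $\bar u^\sigma$ allows an Arzel\`a--Ascoli extraction so that $\bar u^\sigma \to \bar u$ in ${\rm Lip}(\mathbb{R}\times[0,T_b-\delta])$. For the density, the Eulerian formula $\bar\rho^\sigma(\bar x^\sigma(t;x,0),t) = \rho_0^\sigma(x)/(1+(u_0)_x(x)t)$, together with $\rho_0^\sigma \to \rho_0$ a.e.\ and the uniform Jacobian bounds, yields $\bar\rho^\sigma \to \bar\rho$ in $BV_s$ via the same distributional testing argument used in the proof of Theorem \ref{main theorem 1}. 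Sending $\delta \to 0$ recovers the full time interval $[0,T_b)$ in case (2), and $T_b = +\infty$ whenever $(u_0)_x \ge 0$, giving case (1).

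To identify the discontinuity line, I would use the fact that in Lagrangian coordinates the characteristic through any initial jump point $y_0$ of $g_0$ is simply the vertical line $\{y=y_0\}$, carried to $\bar x(t) = y_0 + v_0(y_0)t$ in Eulerian coordinates. Hence $\bar x'(t) = v_0(y_0) = \bar u(\bar x(t),t)$ is constant on each contact segment and therefore trivially Lipschitz; for a general $BV_s$ density these pieces are glued together as in the remark following Theorem \ref{main theorem 1}. Finally, in case (2), the explicit identities
\begin{equation*}
\bar u_x(\bar x(t;x,0),t) = \frac{(u_0)_x(x)}{1+(u_0)_x(x)t}, \quad \bar\rho(\bar x(t;x,0),t) = \frac{\rho_0(x)}{1+(u_0)_x(x)t}
\end{equation*}
show that both quantities blow up at any $X_b$ realizing $T_b = -1/(u_0)_x(X_b)$. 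The one mildly delicate point, and essentially the sole obstacle, is when the infimum in \eqref{Tb} is not attained: one then selects a minimizing sequence $\{X_b^n\}$ and extracts a cluster point $X_b$, showing via the explicit formulas and continuity of $(u_0)_x$ on the complement of the discontinuity set that blow-up genuinely occurs along $\bar x(t;X_b,0)$ as $t \uparrow T_b$.
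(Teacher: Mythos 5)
Your proposal is essentially the same approach the paper takes: the paper derives the explicit Lagrangian formulas $\bar v = v_0$, $\bar J = 1 + (v_0)_y\tau$, $\bar g = g_0/\bar J$, $\bar J^{-1}\bar v_y = (v_0)_y/(1+(v_0)_y\tau)$ and then states the proposition follows ``similar to Theorem~\ref{main theorem 1}''; you carry out that parallel mollify-and-pass-to-the-limit argument in detail. A small observation worth flagging: because the Lagrangian pressureless system decouples and has a closed-form solution in which $\bar J$ depends only on $v_0$ (not on $g_0$), the mollification of $\rho_0$ is essentially a formality here — the formula $\bar g = g_0/\bar J$ is already well-defined and produces the correct distributional solution for $g_0 \in BV_s$ directly, which is cleaner than the genuine compactness argument needed for the Aw--Rascle case. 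The remainder (vertical contact lines in Lagrangian coordinates mapping to straight lines $\bar x(t) = y_0 + v_0(y_0)t$ in Eulerian coordinates, and the blow-up of both $\bar u_x$ and $\bar\rho$ from the explicit denominators) matches the paper's intent. One caveat on your final point: if $u_0$ is merely Lipschitz, $(u_0)_x$ need not be continuous anywhere on the complement of the discontinuity set of $\rho_0$, so the ``continuity'' justification for the cluster-point argument should instead rest on the a.e.\ defined formulas together with the essential infimum defining $T_b$; but this is a loose end shared with the paper's own statement of the blow-up point.
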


	\section{On vanishing pressure limit}
	
	From the above discussion, we know that for Aw-Rascle model, if $u_{0}'<0$ initially, $u_{x}$ will goes to $-\infty$ in finite time. In this section, for any fixed $T$, as $\varepsilon \to 0$, we consider the convergence of $v^{\varepsilon}$ and $g^{\varepsilon}$ on $\mathbb{R}\times\left[0,T\right]$ in Lagrangian coordinates. Without loss of generality, we assume that $\varepsilon<1$. First, we introduce the level set on the lower bound of $\left(J^{\varepsilon}\right)^{-1}v_{y}^{\varepsilon}$:
	\begin{equation}
		m^{\varepsilon}\left(\tau\right):=\inf\limits_{y\in \mathbb{R} } \left \{ \left(J^{\varepsilon}\right)^{-1}v_{y}^{\varepsilon} \left ( y,\tau  \right ) \right \} .
	\end{equation}
	For any fixed $\varepsilon>0$ and the compressive initial data, there exists a finite life-span $T_{b}^{\varepsilon}$ defined in Proposition \ref{life-span of C^1 solution}, which is $+\infty$ for the rarefaction initial data. And we have
	\begin{equation}
		\lim_{\tau\uparrow T_{b}^{\varepsilon } } m^{\varepsilon} \left ( \tau  \right ) =-\infty .
	\end{equation}
	Further, for $M>0$, we define $\tau_{M}^{\varepsilon}$
	\begin{equation}
		\tau_{M}^{\varepsilon}=\sup\left\{s:-M\le \inf\limits_{\tau\in\left[0,s\right]}m^{\varepsilon}\left(\tau\right),s\le T\right\}.
	\end{equation}
	According to the definition, $\left\{\tau_{M}^{\varepsilon}\right\}$ is a monotone sequence with respect to $M$, and
	\begin{equation}\label{tau-M}
		\lim_{M \to +\infty } \tau _{M}^{\varepsilon} =\min_{ } \left \{ T_{b}^{\varepsilon },T\right \}=:\tau _{b}^{\varepsilon } .
	\end{equation}

	First, we need to proof that the density has a uniform lower bound with respect to $\varepsilon$, and we have the following proposition.
	\begin{proposition}\label{vanishing pressure limit lower bound}
		For any $\varepsilon>0$ and the solution $\left(J^{\varepsilon},v^{\varepsilon},Z^{\varepsilon}\right)$ of \eqref{equation of Lagrangian transformation 2}-\eqref{new initial value} with $0-$condition, 
		$g^{\varepsilon}$ has a uniform lower bound with respect to $\varepsilon$.
	\end{proposition}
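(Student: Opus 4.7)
The plan is to follow the scheme used for the previous proposition (the $\sigma$-uniform lower bound under the $\varepsilon$-condition) and verify that, under the strictly stronger $0$-condition, each constant appearing in the estimate can be chosen independently of $\varepsilon$. I would again split the argument according to the sign of the jump at the discontinuity $y_i$, work at the level of the mollified solutions $g^{\sigma,\varepsilon}$, and then pass $\sigma\to 0$.

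First, in the case $g_0(y_i^-) > g_0(y_i^+)$, the estimate leading to \eqref{lower bound 1} already produces $g^{\sigma,\varepsilon}(y,\tau) \ge A_1/(1 + A_2 B_1 \tau)$. I would verify that $B_1$, arising as an upper bound for $(Z_0^{\sigma,\varepsilon})_y/g_0^\sigma = [(v_0^\sigma)_y + \varepsilon^2 p'(g_0^\sigma)(g_0^\sigma)_y]/g_0^\sigma$ away from the jump, can be chosen uniformly in $\varepsilon\in(0,1]$ since the $\varepsilon^2$ factor makes the pressure contribution harmless and $(v_0)_y$ is bounded by the Lipschitz norm of $u_0$. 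Hence this bound is already $\varepsilon$-uniform.

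Second, in the case $g_0(y_i^-) < g_0(y_i^+)$, I would integrate $Z^{\sigma,\varepsilon}_\tau = 0$ along the $1$-characteristic and use $Dv^{\sigma,\varepsilon} = 0$, obtaining
\[
p(g^{\sigma,\varepsilon}(y,\tau)) = p(g_0^\sigma(y)) + \frac{v_0^\sigma(y) - v_0^\sigma(y_1^{\sigma,\varepsilon}(0;y,\tau))}{\varepsilon^2}.
\]
Writing $y' = y_1^{\sigma,\varepsilon}(0;y,\tau)$ and using $|\mu^{\sigma,\varepsilon}|\le C\varepsilon^2$ so that $|y-y'|\le C\varepsilon^2\tau$, I would analyze two regions. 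For $y$ bounded away from $y_i$, the characteristic does not cross the discontinuity for $\varepsilon$ small, and the smooth estimate $D(1/g^{\sigma,\varepsilon}) \le B_2$ with $B_2$ uniform in $\varepsilon$ applies. For $y$ in the mollification region, where $v_0^\sigma(y)=v_0(y_i)$, either $y'\le y_i$ (no crossing, smooth case), or $y'>y_i$, in which case I would combine the $0$-condition gap $v_0(y_i) - v_0(y') > 0$ with Lipschitz bounds on $v_0$ to lower-bound the quotient $(v_0(y_i) - v_0^\sigma(y'))/\varepsilon^2$ by a constant independent of $\varepsilon$.

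The main obstacle will be the intermediate regime where $y$ and $y'$ both lie within $O(\varepsilon^2)$ of $y_i$ on opposite sides, since the $0$-condition gap degenerates there; the resolution is that in this regime the Lipschitz continuity of $v_0$ gives $|v_0^\sigma(y) - v_0^\sigma(y')| = O(\varepsilon^2\tau)$, so the apparently $\varepsilon^{-2}$-singular ratio in the displayed formula is in fact bounded, and $p(g^{\sigma,\varepsilon})$ stays above a constant determined only by $p(g_0(y_i^-))$ and the Lipschitz norm of $v_0$. Combining these pieces and taking the minimum yields a uniform-in-$\varepsilon$ (and in $\sigma$) lower bound, which passes to $g^\varepsilon$ in the limit $\sigma\to 0$.
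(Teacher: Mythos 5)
Your scheme follows the paper's (the same identity $p(g^{\sigma,\varepsilon}(y,\tau)) = p(g_0^\sigma(y)) + \bigl(v_0^\sigma(y) - v_0^\sigma(y')\bigr)/\varepsilon^2$ with $y'=y_1^{\sigma,\varepsilon}(0;y,\tau)$, and the same case split), and your treatment of the decreasing jump and of the no-crossing regime is sound. The gap is in your resolution of the ``degenerate regime.'' Estimating $|v_0^\sigma(y)-v_0^\sigma(y')| \le L\,|y-y'| \le LC\varepsilon^2\tau$ (with $L$ the Lipschitz constant of $v_0$ and $C$ a bound on $|\mu^{\sigma,\varepsilon}|/\varepsilon^2$) only yields $p(g^{\sigma,\varepsilon}(y,\tau)) \ge p(g_0(y_i^-)) - LC\tau$, and since $p\ge 0$ the right-hand side is vacuous once $\tau > p(g_0(y_i^-))/(LC)$. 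So the ``constant determined only by $p(g_0(y_i^-))$ and the Lipschitz norm of $v_0$'' you assert is not in fact a positive lower bound on the whole life span, which can even be $[0,\infty)$ when $(u_0)_x\ge 0$.

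The missing step is to use the $0$-condition \emph{at} the discontinuity, where it gives a $\tau$-free bound, and then propagate by the Riccati-type estimate rather than by the algebraic identity. Concretely, if $y<-\sigma$ and the characteristic $\gamma(s)=y_1^{\sigma,\varepsilon}(s;y,\tau)$ crosses the mollification region, let $\tau_*$ be the time with $\gamma(\tau_*)=-\sigma$. At $(-\sigma,\tau_*)$,
\begin{equation*}
\frac{v_0^\sigma(-\sigma)-v_0^\sigma(y')}{\varepsilon^2}
= \frac{v_0(-\sigma)-v_0(0)}{\varepsilon^2} + \frac{v_0(0)-v_0^\sigma(y')}{\varepsilon^2}
\ge -\frac{L\sigma}{\varepsilon^2},
\end{equation*}
because the second fraction is nonnegative by the $0$-condition (together with the construction of $v_0^\sigma$), while the first is $O(\sigma/\varepsilon^2)$ and vanishes as $\sigma\to0$ \emph{with no factor of $\tau$}; hence $p(g^{\sigma,\varepsilon}(-\sigma,\tau_*)) \ge p(g_0(y_i^-)) - o_\sigma(1)$ uniformly in $\tau$. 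On $[\tau_*,\tau]$ the characteristic stays in $\{y<-\sigma\}$, where $D(1/g^{\sigma,\varepsilon}) = (Z_0^{\sigma,\varepsilon})_y/g_0^\sigma \le B_2$ with $B_2$ uniform in $\sigma$ and $\varepsilon\le 1$; integrating,
\begin{equation*}
\frac{1}{g^{\sigma,\varepsilon}(y,\tau)} \le \frac{1}{g^{\sigma,\varepsilon}(-\sigma,\tau_*)} + B_2\tau,
\end{equation*}
which is a positive lower bound for all $\tau$ and survives $\sigma\to 0$, giving $g^{\varepsilon}(y,\tau)\ge g_0(y_i^-)/(1+g_0(y_i^-)B_2\tau)$. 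In short, the $0$-condition is what makes the density bound \emph{at the crossing time} independent of $\tau$ and $\varepsilon$, and the smooth Riccati estimate carries it across the rest of the characteristic; applying the identity directly at $(y,\tau)$ merges these two steps and incurs the fatal $O(\tau)$ loss. You should also address the fact that ``the characteristic does not cross for $\varepsilon$ small'' leaves a compact $\varepsilon$-interval uncovered for $y$ bounded away from $y_i$; that case is handled by the $\sigma$-uniform bound of the previous proposition, whose $p^{-1}(A^\varepsilon)$ is bounded below there.
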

	\begin{proof}
		1. Region $\Omega_{+}\cup \Omega^{\varepsilon}_{\amalg }$: For any point $\left(\tilde{y},\tilde{\tau}\right)$ in $\Omega_{+}\cup\Omega^{\varepsilon}_{\amalg }$, 
		by proposition \ref{lower bound of rho in Lip case}, the density has a uniform lower bound:
		\begin{equation}\label{}
			g^{\varepsilon}\left(y,\tau\right)\ge \frac{A_{1}}{1+A_{2}B\tau}.
		\end{equation}
		where $A_{1}=\min_{y\in\mathbb{R}} g_{0}$, $A_{2}=\max_{y\in\mathbb{R}} g_{0}$, and $B$ can be defined as
		\begin{equation}
			B=  \frac{\left(\Lip\left(Z_{0}^{\varepsilon}\right)\right)_{+}}{g_{0}},
		\end{equation}
		here $\left(Z_{0}^{\varepsilon}\right)_{C}=Z_0^\varepsilon-\left(Z_{0}^{\varepsilon}\right)_{J}$ is the absolute continuous part of $Z_0$, and $\left(\Lip\left(Z_{0}^{\varepsilon}\right)\right)_{+}$ 
		is the Lipschitzs constant of the continuous part of $Z^\varepsilon_0$ without decreasing
		
		2. The discontinuous curve $y=0$: If $g_{0}(0^{-})>g_{0}(0^{+})$ at $y=0$. Since $v_{0}$ is continuous, $Z_{0}(0^{-})>Z_{0}(0^{+})$. By \eqref{D(1/g)}, similar to the case in $\Omega_{+}\cup \Omega^{\varepsilon}_{\amalg }$, there exists a constant $B$ such that the density has a lower bound \eqref{lower bound in region 1}.

		If $g_{0}(0^{-})<g_{0}(0^{+})$ at $y=0$, for $(0, \tilde{\tau})$ 
		and $p\left(g^{\varepsilon}\left(0^{-},\tilde{\tau}\right)\right)$, we have
		\begin{equation}
			p\left(g^{\varepsilon}\left(0^{-},\tilde{\tau}\right)\right) = \frac{Z^{\varepsilon}_{0}\left(0^{-}\right)-v_{0}\left(y_{1}^{\varepsilon}\left(0;0,\tilde{\tau}\right)\right)}{\varepsilon^{2}}
			= p\left(g_{0}\left(0^{-}\right)\right) + \frac{v_{0}\left(0^{-}\right)-v_{0}\left(y_{1}^{\varepsilon}\left(0;0,\tilde{\tau}\right)\right)}{\varepsilon^{2}}.
		\end{equation}
		By \eqref{assumption2}, we have
		\begin{equation}\label{lower bound on discontinuous line}
			p\left(g^{\varepsilon}\left(0^{-},\tilde{\tau}\right)\right) \ge p\left(g_{0}\left(0^{-}\right)\right) \ge p\left(\underline{g_{0}}\right).
		\end{equation}
		So we have
		\begin{equation}\label{lower bound of rho}
			g^{\varepsilon}\left(0^{+},\tilde{\tau}\right) > g^{\varepsilon}\left(0^{-},\tilde{\tau}\right) \ge \underline{g_{0}}.
		\end{equation}
		
		3. Region $\Omega^{\varepsilon}_{\rm{I}}$: For any point $\left(\tilde{y},\tilde{\tau}\right) \in \Omega^{\varepsilon
		}_{\rm{I}}$, by proposition \ref{lower bound of rho in Lip case}, we have
		\begin{equation}
			g^{\varepsilon}\left(\tilde{y},\tilde{\tau}\right)
			\ge\frac{\min g^{\varepsilon}\left(0^{-},\tau_{0}\right) }{1+\min g^{\varepsilon}\left(0^{-},\tau_{0}\right)B\left(\tilde{\tau}-\tau_{0}\right)} \ge \frac{A'_{1}}{1+A'_{1}B\tau}.
		\end{equation}
		where $A'_{1}=\min g^{\varepsilon}\left(0^{-},\tau_{0}\right)$, $B =  \frac{\left(\Lip\left(Z_{0}^{\varepsilon}\right)\right)_{+}}{g_{0}}$.
		Combining the above cases, we obtain that the density has a lower bound in $\Omega$ under the Lagrangian coordinate, denoted by $\underline{g}$:
		\begin{equation}\label{The lower bound independent of epsilon}
			\underline{g}: = \min\left\{\frac{A_{1}}{1+A_{2}B\tau}, \underline{g_{0}}, \frac{A'_{1}}{1+A'_{1}B\tau}\right\}.
		\end{equation}
		
	\end{proof}
	
	\begin{remark}
		In order to get a uniform lower bound of $g^{\varepsilon}$, for constant $M$, we need
		\begin{equation}
			p\left(g^{\varepsilon}\left(y,\tau\right)\right) = p\left(g_{0}(y)\right) + \frac{v_{0}(y)-v_{0}\left(y_{1}^{\varepsilon}\left(0;y,\tau\right)\right)}{\varepsilon^{2}} \ge M >-\infty.
		\end{equation}
		The above equation is equivalent to
		\begin{equation}
			v_{0}(y) \ge v_{0}\left(y_{1}^{\varepsilon}\left(0;y,\tau\right)\right) + \varepsilon^{2}\left(M-p\left(g_{0}(y)\right)\right).
		\end{equation}
		As $\varepsilon\to 0 $, the above equation is equivalent to the $0$-condition, which is necessary and sufficient for both  $\lim_{g \to 0}p\left(g\right) = 0$ case and  $\lim_{g \to 0}p\left(g\right) = -\infty$ case.
	\end{remark}

	For fixed sufficiently large $M$, let $\underline{\tau}_{M}:=\varliminf_{\varepsilon\to0}\tau_{M}^{\varepsilon}$, see Lemma \ref{positive lower bound of t} for specific proof. Next, we have the following $L^{\infty}$ estimates lemma. 
	\begin{lemma}\label{uniform estimations} ($L^{\infty}$ uniform estimations) . On $\mathbb{R}\times \left[0,\underline{\tau}_{M}\right]$, for any $\varepsilon>0$,  $g^{\varepsilon}$ has uniform bound with respect to $\varepsilon$ in $L^{\infty}\left(\mathbb{R}\times \left[0,\underline{\tau}_{M}\right]\right)$; and $v^{\varepsilon}$ has uniform bound with respect to $\varepsilon$ in $Lip\left(\mathbb{R}\times \left[0,\underline{\tau}_{M}\right]\right)$ .
	\end{lemma}
	\begin{proof}
		On $\mathbb{R}\times \left[0,\underline{\tau}_{M}\right]$, we have
		\begin{equation}
			\ln_{}{\frac{g^{\varepsilon}\left(y,\tau\right)}{g_{0}\left(y\right)}} = \int_{0}^{\tau}\left(\ln_{}{g^{\varepsilon}\left(y,s\right)} \right)_{\tau}ds  =-\int_{0}^{\tau}\left(J^{\varepsilon}\right)^{-1}v_{y}^{\varepsilon}\left(y,s\right) ds \le MT.
		\end{equation}
		Therefore, on $\mathbb{R}\times \left[0,\underline{\tau}_{M}\right]$, let $\bar{g}_{0}:=\max_{y\in\mathbb{R}}g_{0}$ we obtain the uniform upper bound of $g^{\varepsilon}$.
		\begin{equation}
			g^{\varepsilon}\le \bar{g}_{0} e^{MT} =: \bar{g}_{M}.
		\end{equation}
		Combining \eqref{The lower bound independent of epsilon}, then $g^{\varepsilon}$ has uniform bound with respect to $\varepsilon$ on $\mathbb{R}\times \left[0,\underline{\tau}_{M}\right]$:
		\begin{equation}\label{the uniform estimation of J}
			\underline{g} \le g^{\varepsilon} \le \bar{g}_{M}.
		\end{equation}
		By \eqref{v=v0}, $v^{\varepsilon}$ has uniform bound with respect to $\varepsilon$
		\begin{equation}
			\min_{y\in\mathbb{R}} v_{0}(y) \le v^{\varepsilon}(y,\tau)\le \max_{y\in\mathbb{R}}v_{0}(y).
		\end{equation}
		Next, we estimate the bound of $\left(J^{\varepsilon}\right)^{-1}v_{y}^{\varepsilon}$, due to
		\begin{equation}
			D\left(v^{\varepsilon}_{\tau}\right)=-\frac{I\left(g^{\varepsilon}\right)}{\varepsilon^{2}}\left(v^{\varepsilon}_{\tau}\right)^{2}\le 0.
		\end{equation}
		So we have
		\begin{equation}
			\varepsilon^{2}\left(g^{\varepsilon} p'\left(g^{\varepsilon}\right) \left(J^{\varepsilon}\right)^{-1}v_{y}^{\varepsilon}\right)\left(y,\tau\right) \le \varepsilon^{2}\left(g_{0} p'\left(g_{0}\right)  v_{0}'\right)\left(y_{1}^{\varepsilon}\left(0;y,\tau\right)\right).
		\end{equation}
		$g^{\varepsilon}$ is uniformly bounded for any $\varepsilon$ and $p\in C^{2}\left(\mathbb{R^{+}}\right)$, so $ p'\left(g^{\varepsilon}\right) $ is uniformly bounded. So we obtain the uniform upper bound of $\left(J^{\varepsilon}\right)^{-1}v_{y}^{\varepsilon}$ on $\mathbb{R}\times \left[0,\underline{\tau}_{M}\right]$
		\begin{equation}\label{}
			\left(\left(J^{\varepsilon}\right)^{-1}v_{y}^{\varepsilon}\right)\left(y,\tau\right) \le \frac{\left(g_{0} p'\left(g_{0}\right)   v_{0}'\right)\left(y_{1}^{\varepsilon}\left(0;y,\tau\right)\right)}
			{ \left(g^{\varepsilon} p' \left(g^{\varepsilon}\right) \right) \left(y,\tau\right)}  \\
			\le C_{1}\left(M\right),
		\end{equation}
		On the other hand, due to
		\begin{equation}
			\left(J^{\varepsilon}\right)^{-1}v_{y}^{\varepsilon}\ge -M,
		\end{equation}
		on $\mathbb{R}\times \left[0,\underline{\tau}_{M}\right]$, we have
		\begin{equation}
			\left \| \left(J^{\varepsilon}\right)^{-1}v_{y}^{\varepsilon}\right \| _{\infty } \le \max_{} \left \{
			C_{1}\left(M\right) ,M \right \} \le C\left(M\right).
		\end{equation}
	\end{proof}
	
	According to the transformation of Eulerian coordinates and Lagrangian coordinates, we have
	\begin{equation} \label{}
		\begin{cases}
			u_{x}^{\varepsilon} = \left(J^{\varepsilon}\right)^{-1}v_{y}^{\varepsilon} ,\\
			u_{t}^{\varepsilon} = -\left(v^{\varepsilon} -\varepsilon^{2}g^{\varepsilon} p'\left(g^{\varepsilon}\right)  \right)\left(J^{\varepsilon}\right)^{-1}v_{y}^{\varepsilon}.
		\end{cases}
	\end{equation}
	
	Correspondingly we have the following lemma.
	\begin{lemma} \label{uniform estimations 2}
		On $\mathbb{R}\times \left[0,\underline{t}_{M}\right]$, for any $\varepsilon>0$, $\rho^{\varepsilon}$ has uniform bound with respect to $\varepsilon$ in $L^{\infty}\left(\mathbb{R}\times \left[0,\underline{t}_{M}\right]\right)$; and $u^{\varepsilon}$ has uniform bound with respect to $\varepsilon$ in $Lip\left(\mathbb{R}\times \left[0,\underline{t}_{M}\right]\right)$.
	\end{lemma}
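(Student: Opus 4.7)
The plan is to transport the Lagrangian-coordinate bounds from the preceding lemma into the Eulerian frame via the coordinate identifications $\rho^{\varepsilon}(x,t)=g^{\varepsilon}(y,\tau)$ and $u^{\varepsilon}(x,t)=v^{\varepsilon}(y,\tau)$. The key observation is that, since $\tau=t$ and the pointwise identity $u_{x}^{\varepsilon}=(J^{\varepsilon})^{-1}v_{y}^{\varepsilon}$ holds, the Lagrangian level set defining $\tau_{M}^{\varepsilon}$ and the Eulerian level set defining $t_{M}^{\varepsilon}$ coincide under the coordinate change, so the time interval is preserved and we may work on $\mathbb{R}\times[0,t_{M}^{\varepsilon}]$ without ambiguity.

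First I would invoke the preceding Lagrangian lemma to obtain, uniformly in $\varepsilon$, the two-sided bound $\underline{g}\le g^{\varepsilon}\le \bar{g}_{M}$ and the uniform bounds $\|v^{\varepsilon}\|_{L^{\infty}},\,\|(J^{\varepsilon})^{-1}v_{y}^{\varepsilon}\|_{L^{\infty}}\le C(M)$. The $L^{\infty}$ bound on $\rho^{\varepsilon}$ is then immediate by substitution, as is the $L^{\infty}$ bound on $u^{\varepsilon}$. For $u_{x}^{\varepsilon}$, the identity $u_{x}^{\varepsilon}=(J^{\varepsilon})^{-1}v_{y}^{\varepsilon}$ transports the bound without further estimate.

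The only mildly nontrivial point is the bound on $u_{t}^{\varepsilon}=-\bigl(v^{\varepsilon}-\varepsilon^{2}g^{\varepsilon}p'(g^{\varepsilon})\bigr)(J^{\varepsilon})^{-1}v_{y}^{\varepsilon}$. Here I would observe that because $g^{\varepsilon}$ is confined to the compact interval $[\underline{g},\bar{g}_{M}]$ and $p\in C^{2}(\mathbb{R}^{+})$, the quantity $g^{\varepsilon}p'(g^{\varepsilon})$ is uniformly bounded in $\varepsilon$, so $\varepsilon^{2}g^{\varepsilon}p'(g^{\varepsilon})$ is in fact $O(\varepsilon^{2})$; combined with the uniform bounds on $v^{\varepsilon}$ and $(J^{\varepsilon})^{-1}v_{y}^{\varepsilon}$, this yields the desired $L^{\infty}$ control of $u_{t}^{\varepsilon}$.

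I expect no serious obstacle in this proof: the real work was done in establishing the uniform lower bound for $g^{\varepsilon}$ (via the $0$-condition) and the Lagrangian $W^{1,\infty}$ estimate. The main subtlety is simply the justification that $t_{M}^{\varepsilon}=\tau_{M}^{\varepsilon}$ under the Lagrangian change of variables, which follows once we verify that the infimum of $(J^{\varepsilon})^{-1}v_{y}^{\varepsilon}$ over $y\in\mathbb{R}$ equals the infimum of $u_{x}^{\varepsilon}$ over $x\in\mathbb{R}$ for each fixed time slice; this is a direct consequence of the bijectivity of the coordinate change on slices where $J^{\varepsilon}>0$, which holds throughout $[0,t_{M}^{\varepsilon}]$.
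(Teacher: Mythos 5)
Your argument is correct and coincides with what the paper does: Lemma~4.3 is stated in the paper as an immediate consequence of Lemma~4.2 together with the Eulerian--Lagrangian transformation identities $u_{x}^{\varepsilon}=(J^{\varepsilon})^{-1}v_{y}^{\varepsilon}$ and $u_{t}^{\varepsilon}=-\bigl(v^{\varepsilon}-\varepsilon^{2}g^{\varepsilon}p'(g^{\varepsilon})\bigr)(J^{\varepsilon})^{-1}v_{y}^{\varepsilon}$, which is exactly the substitution you carry out. Your extra remarks on the boundedness of $\varepsilon^{2}g^{\varepsilon}p'(g^{\varepsilon})$ over the compact range $[\underline{g},\bar{g}_{M}]$ and on the identification $t_{M}^{\varepsilon}=\tau_{M}^{\varepsilon}$ under the slice-wise bijection are the details the paper leaves implicit, and they are accurate.
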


	Before we discuss the convergence of $\rho^{\varepsilon}$ and $u^{\varepsilon}$,
	we need to estimate the uniform lower bound of $t_{M}^{\varepsilon}$ with respect to $\varepsilon$, which means that for $M$ large enough, $t_{M}^{\varepsilon}$ does not go to 0 as $\varepsilon$ goes to 0.
	
	\begin{lemma}\label{positive lower bound of t}
		For $M$ is large enough, $t_{M}^{\varepsilon}$ has a uniform positive lower bound.
	\end{lemma}
	\begin{proof}
		
		This claim is obviously true for $u_{0}'\ge0$, we then prove it by contradiction for $u_{0}'<0$.
		If the claim is false, we can find a subsequence $\left\{\varepsilon_{n}\right\}$ such that $t_{M}^{\varepsilon_{n}} \to 0 $ as $n\to \infty$. By \eqref{blow-up of general pressure}, in Eulerian coordinates, we can find a point $\left(x_{M}^{\varepsilon_{n}},t_{M}^{\varepsilon_{n}}\right)$ such that
		\begin{equation}\label{}
			-M = u_{x}^{\varepsilon_{n}}\left(x_{M}^{\varepsilon_{n}},t_{M}^{\varepsilon_{n}}\right) 
			= \frac{1}{\left(\rho^{\varepsilon_{n}}p'\left(\rho^{\varepsilon_{n}}\right)\right)\left(x_{M}^{\varepsilon_{n}},t_{M}^{\varepsilon_{n}}\right)} \cdot \frac{\left(\rho_{0} p'\left(\rho_{0}\right) u_{0}'\right) \left(x_{1}^{\varepsilon_{n}}\left(0;x_{M}^{\varepsilon_{n}},t_{M}^{\varepsilon_{n}}\right)\right)}
			{1+\left(\rho_{0}p'\left(\rho_{0}\right) u_{0}'\right)\left(x_{1}^{\varepsilon_{n}}\left(0;x_{M}^{\varepsilon_{n}},t_{M}^{\varepsilon_{n}}\right)\right)\int_{0}^{t_{M}^{\varepsilon_{n}}}I \left(\rho^{\varepsilon_{n}}\left(x_{M}^{\varepsilon_{n}},s\right)\right)ds}.
		\end{equation}
		For $\varepsilon_{n}>0$, $\rho^{\varepsilon_{n}}$ has uniform upper and lower bounds
		\begin{equation}\label{bound of rho 2}
			\underline{\rho} \le \rho^{\varepsilon_{n}}\le 
			\bar{\rho}_{0} e^{Mt_{M}^{\varepsilon_{n}}}.
		\end{equation}
		Therefore, for $\rho^{\varepsilon_{n}}\in\left[\underline{\rho}, \bar{\rho}_{0} e^{Mt_{M}^{\varepsilon_{n}}}\right]$, $I(\rho)$ is a continuous function respect to $\rho$, $I\left(\rho^{\varepsilon_{n}}\right)$ has a uniform upper and lower bounds.
		By condition \eqref{condition of p} satisfied by $p\left(\rho\right)$, we have
		\begin{equation}
			\left(\rho^{2}p'\left(\rho\right)\right)' = 2\rho p'\left(\rho\right) + \rho^{2} p''\left(\rho\right) >0.
		\end{equation}
		Combining the \eqref{bound of rho 2} and monotonicity of $\rho^{2} p'\left(\rho\right)$, we have
		\begin{equation}\label{bound of rhop'}
			\rho^{\varepsilon_{n}}p'\left(\rho^{\varepsilon_{n}}\right) = \frac{\left(\rho^{\varepsilon_{n}}\right)^{2}p'\left(\rho^{\varepsilon_{n}}\right)}{\rho^{\varepsilon_{n}}} \ge 
			\frac{\left(\underline{\rho}\right)^{2}p'\left(\underline{\rho}\right)}{\bar{\rho}_{0} e^{Mt_{M}^{\varepsilon_{n}}}}.
		\end{equation}
		So, as $n\to \infty$, 
		\begin{eqnarray}\label{by contradiction}
			-M 
			& \ge & \frac{\bar{\rho}_{0} e^{Mt_{M}^{\varepsilon_{n}}}}{\left(\underline{\rho}\right)^{2}p'\left(\underline{\rho}\right)} \cdot \frac{\min_{x\in \mathbb{R}}\left(\rho_{0} p'\left(\rho_{0}\right) u_{0}'\right) }
			{1+\left(\rho_{0}p'\left(\rho_{0}\right) u_{0}'\right)\left(x_{1}^{\varepsilon_{n}}\left(0;x_{M}^{\varepsilon_{n}},t_{M}^{\varepsilon_{n}}\right)\right)\int_{0}^{t_{M}^{\varepsilon_{n}}}I \left(\rho^{\varepsilon_{n}}\left(x_{M}^{\varepsilon_{n}},s\right)\right)ds} \nonumber \\
			& \to & 
			\frac{\bar{\rho}_{0} \cdot \min_{x\in \mathbb{R}}\left(\rho_{0} p'\left(\rho_{0}\right) u_{0}'\right)}{\left(\underline{\rho}\right)^{2}p'\left(\underline{\rho}\right)}.
		\end{eqnarray}
		On the other hand, when $M$ is sufficiently large
		\begin{equation}
			\frac{\bar{\rho}_{0} \cdot \min_{x\in \mathbb{R}}\left(\rho_{0} p'\left(\rho_{0}\right) u_{0}'\right)}{\left(\underline{\rho}\right)^{2}p'\left(\underline{\rho}\right)} \ge -M.
		\end{equation}
		This contradicts with \eqref{by contradiction}.
		Therefore, for each fix $M$ large enough, $t_{M}^{\varepsilon}$ has a uniformly positive lower bound, denoted by $\underline{T}$.

	\end{proof}

	Based on the uniform estimates discussed above, we then prove Theorem \ref{Vanishing Pressure Limit}(1). There is a uniform domain defined as $\underline{t}_{M}:=\varliminf_{\varepsilon\to0}t_{M}^{\varepsilon}$.
	For the compact set $H\subset \mathbb{R}\times\left[0,\underline{t}_{M}\right]$, the estimates we did above are uniform in $H$, we have
	\begin{equation}\label{convergence}
		u^{\varepsilon}\to \bar{u} \,\, {\rm in} \,\,  {\Lip} \left(H\right),\quad \rho^{\varepsilon}\to \bar{\rho} \,\, {\rm in} \,\, \mathcal{M}\left(H \right).
	\end{equation}
	As $\varepsilon\to 0$, combine the $\eqref{x-i}$ and the convergence of $(u^\varepsilon,\rho^\varepsilon)$, we have 
	\begin{equation}
		x^{\varepsilon}\left(t\right) \to \bar{x}\left(t\right).
	\end{equation}
	where $\bar{x}\left(t\right)$ is a Lipschitz function with respect to $t$. 
	\begin{remark}
		In Lagrangian coordinates, for $\tau\le\underline{\tau}_{M}$ and any point $\left(y,\tau\right)$ in $\Omega^{\varepsilon}_{\rm{I}}$, when $\varepsilon$ is sufficiently small, the backward characteristic line of this point is included in $\Omega_{-}$. That is, when $\varepsilon\to 0$, $\Omega^{\varepsilon}_{\amalg}\to \Omega_{-}$ and $\Omega=\bar{\Omega}_{+}\cup \Omega_{-}$.
	\end{remark}
	
	Next,  we consider the consistency. For $\varepsilon>0$ and $\varphi\in C_{c}^{\infty}\left(H\right)$, we have
	\begin{equation}
		\int_{0}^{\infty } \int_{\mathbb{R} }^{} \rho ^{\varepsilon} \varphi _{t} +\rho ^{\varepsilon} u ^{\varepsilon} \varphi _{x} dxdt+\int_{\mathbb{R} }^{} \rho _{0} \varphi \left ( x,0 \right ) dx=0,
	\end{equation}
	\begin{equation}
		\int_{0}^{\infty } \int_{\mathbb{R} }^{} \rho ^{\varepsilon}\left ( u^{\varepsilon}+\varepsilon ^{2} p\left(\rho^{\varepsilon}\right) \right ) \varphi _{t} +\rho ^{\varepsilon} u ^{\varepsilon} \left ( u^{\varepsilon}+\varepsilon ^{2} p\left(\rho^{\varepsilon}\right) \right ) \varphi _{x} dxdt+\int_{\mathbb{R} }^{} \rho _{0} \left ( u_{0}+\varepsilon ^{2} p\left(\rho_{0}\right) \right ) \varphi \left ( x,0 \right ) dx=0.
	\end{equation}
	As $\varepsilon\to 0$, by the convergence of $\left(\rho^{\varepsilon},u^{\varepsilon}\right)$, the above equalities turn to
	\begin{equation}
		\int_{0}^{\infty } \int_{\mathbb{R} }^{} \bar{\rho} \varphi _{t} + \bar{\rho} \bar{u} \varphi _{x} dxdt+\int_{\mathbb{R} }^{} \rho _{0} \varphi \left ( x,0 \right ) dx=0,
	\end{equation}
	\begin{equation}
		\int_{0}^{\infty } \int_{\mathbb{R} }^{} \bar{\rho} \bar{u} \varphi _{t} + \bar{\rho} \bar{u}^{2}\varphi _{x} dxdt+\int_{\mathbb{R} }^{} \rho _{0} u_{0} \varphi \left ( x,0 \right ) dx=0,
	\end{equation}
	which show $\left(\bar{\rho},\bar{u}\right)$ satisfies \eqref{pressureless Euler} with initial value \eqref{intial value of AR} in $H$.
	Finally, we consider the convergence of $u_{x}^{\varepsilon}$. For $\varphi\in C_{c}^{\infty}\left(H\right)$, as $\varepsilon\to 0$, 
	\begin{equation}
		\int_{0}^{\infty } \int_{\mathbb{R} }^{} \left ( u_{x}^{\varepsilon }-\bar{u} _{x} \right ) \varphi dxdt = -\int_{0}^{\infty } \int_{\mathbb{R} }^{} \left ( u^{\varepsilon }-\bar{u}\right ) \varphi_{x}  dxdt \to 0. 
	\end{equation}
	So we have $u_{x}^{\varepsilon } \rightharpoonup \bar{u} _{x} $. Since for each $H$, $u_{x}^{\varepsilon }$ are uniformly bounded respect to $\varepsilon$, then $u_{x}^{\varepsilon } \rightarrow \bar{u} _{x} $ in $L^\infty(H)$.
	
	Then, in Lagrangian coordinates, we could have as $\varepsilon \to 0$
	\begin{equation}
		m^{\varepsilon}\left(\tau\right) \to 
		m\left(\tau\right) := \inf\limits_{y\in\mathbb{R}} \left \{ \bar{J}^{-1}\bar{v}_{y} \left ( y,\tau \right ) \right \} \,\, and \,\, \lim_{\tau\uparrow T_{b} } m \left ( \tau  \right ) =-\infty ,
	\end{equation}
	where $T_{b}$ is defined in \eqref{Tb}.
	Similar to $\tau_{M}^{\varepsilon}$, for $M>0$, we could introduce
	\begin{equation}
		\tau_{M}=\sup\left\{s:-M\le \inf\limits_{\tau\in\left[0,s\right]}m\left(\tau\right),s\le T\right\}.
	\end{equation}
	According to the definition, $\left\{\tau_{M}\right\}$ is a monotone sequence with respect to $M$, and as $M\to\infty$
	\begin{equation}\label{convergence of tM}
		\lim_{M \to +\infty } \tau _{M} =\min_{ } \left \{ T_{b},T\right \}=: \tau _{b}.
	\end{equation}
	By the convergence of the level set, we have 
	\begin{equation}\label{convergence of tMe}
		\lim_{\varepsilon \to 0} \tau _{M}^{\varepsilon} = \tau_{M}.
	\end{equation}
	
	Next, we shows that 
	\begin{equation}\label{relationship of Tb}
		T_{b} \le \varliminf_{\varepsilon\to0} T_{b}^{\varepsilon}.
	\end{equation}
	Since $\tau_{M}^{\varepsilon} \le T_{b}^{\varepsilon}.$ So we have
	\begin{equation}\label{convergence of tM}
		\lim_{M \to +\infty } \lim_{\varepsilon \to 0 } \tau _{M}^{\varepsilon} \le \lim_{M \to +\infty }\varliminf_{\varepsilon\to0} T_{b}^{\varepsilon}.
	\end{equation}
	The left hand side holds 
	\begin{equation}
		\lim_{M \to +\infty } \lim_{\varepsilon \to 0 } \tau _{M}^{\varepsilon} =
		\lim_{M \to +\infty } \tau_{M} =T_{b}.
	\end{equation}
	So we get \eqref{relationship of Tb}.

	\section{Convergence of blow-up time}
	In this section, if $\rho_{0}\in C^1_{s} \left(\mathbb{R}\right)$, $u_{0}\in C^{1} \left(\mathbb{R}\right)$, we can further obtain the convergence of the blow-up time $T_{b}^{\varepsilon}$. First, we need to introduce the non-discontinuous regions as:
	$$\Omega_{M}^{\varepsilon} = \Omega_{+} \cup\Omega^{\varepsilon}_{\amalg }.$$
	Similar to the previous definition, we define 
	\begin{equation}
		\bar{m}^{\varepsilon}\left(\tau\right):=\inf\limits_{(y, \tau)\in \Omega_{M}^{\varepsilon}} \left \{ \left(J^{\varepsilon}\right)^{-1}v_{y}^{\varepsilon} \left ( y,\tau  \right ) \right \} .
	\end{equation}
	For any fixed $\varepsilon>0$ and the compressive initial data, there exists a finite life-span $\bar{T}_{b}^{\varepsilon}$, which is $+\infty$ for the rarefaction initial data.
	And we have
	\begin{equation}
		\lim_{\tau\uparrow \bar{T}_{b}^{\varepsilon } } \bar{m}^{\varepsilon} \left ( \tau  \right ) =-\infty .
	\end{equation}
	Further, for $M>0$, we define $\bar{\tau}_{M}^{\varepsilon}$
	\begin{equation}
		\bar{\tau}_{M}^{\varepsilon}=\sup\left\{s:-M\le \inf\limits_{\tau\in\left[0,s\right]}\bar{m}^{\varepsilon}\left(\tau\right),s\le T\right\}.
	\end{equation}
	According to the definition, $\left\{\bar{\tau}_{M}^{\varepsilon}\right\}$ is a monotone sequence with respect to $M$, and
	\begin{equation}
		\lim_{M \to +\infty } \bar{\tau} _{M}^{\varepsilon} =\min_{ } \left \{ \bar{T}_{b}^{\varepsilon },T\right \}=:\bar{\tau} _{b}^{\varepsilon } .
	\end{equation}
	Before discussing the convergence of the blow-up time, we need to discuss the modulus of continuity estimation in $\Omega_{M}^{\varepsilon}$.
	
	The modulus of continuity of a function $f\left(x,t\right)$ is defined by the following non-negative function:
	\begin{equation}
		\eta \left(f\mid \varepsilon\right)\left(s\right) = 
		\sup \left\{\left | f\left(x_{1},s\right) - f\left(x_{2},s\right) \right | : s\in \left[0,t\right], \left | x_{1} - x_{2} \right |<\varepsilon \right\}.
	\end{equation}
	For modulus of continuity, there are the following properties, for more details please refer to \cite{Li Ta-Tsien Yu Wenci}:
	\begin{lemma}\label{modulus of continuity}
		For modulus of continuity
		
		(1) If $\epsilon_{1}<\epsilon_{2}$, $\eta\left(f\mid \epsilon_{1}\right) < \eta\left(f\mid \epsilon_{2}\right)$.
		
		(2) For any positive number $C$, $\eta\left(f\mid C\epsilon \right) \le \left(C+1\right)\eta\left(f\mid \epsilon \right)$.
		
		(3) For $f$ and $g$ that are two continuous functions, 
		$\eta\left(f\pm g\mid \epsilon\right) \le \eta\left(f\mid \epsilon\right) + \eta\left(g\mid \epsilon\right)$, and
		$\eta\left(fg\mid\epsilon\right)\le \eta\left(f\mid \epsilon\right)\left \| g \right \|_{L^{\infty}} +\eta \left(g\mid \epsilon\right)\left \| f \right \|_{L^{\infty}}$.
		
		(4) If $f\in C^{\alpha}$, $0<\alpha\le 1$, if and only if there exists a constant $L$ such that $\eta\left(f\mid \epsilon\right)\le L\epsilon^{\alpha}$.
	\end{lemma}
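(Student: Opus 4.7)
The plan is to verify each of the four items directly from the definition $\eta(f\mid\varepsilon)(s)=\sup\{|f(x_1,s)-f(x_2,s)|:|x_1-x_2|<\varepsilon\}$ (with the sup possibly also taken over $s$ in a given interval, as written). These are standard textbook facts about moduli of continuity, so the arguments are short; I will sketch each and flag two minor wording issues in the statement.

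For (1), the monotonicity is an inclusion-of-domain argument: the set of pairs $(x_1,x_2)$ with $|x_1-x_2|<\varepsilon_1$ sits inside the analogous set for $\varepsilon_2$ whenever $\varepsilon_1<\varepsilon_2$, so the supremum over the smaller set is dominated by the supremum over the larger. The conclusion one naturally obtains is a non-strict $\le$ rather than the strict inequality printed in the statement, and I would record the corrected form. For (2), I would split into two cases. When $0<C\le 1$, the estimate follows from (1) since $C+1\ge 1$. When $C>1$, set $n=\lceil C\rceil$ so $n\le C+1$; given a pair with $|x_1-x_2|<C\varepsilon$, interpolate equally spaced points $x_1=y_0,y_1,\ldots,y_n=x_2$ with consecutive spacing strictly less than $\varepsilon$. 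A telescoping triangle inequality yields $|f(x_1,s)-f(x_2,s)|\le\sum_{i=0}^{n-1}|f(y_{i+1},s)-f(y_i,s)|\le n\,\eta(f\mid\varepsilon)(s)\le(C+1)\eta(f\mid\varepsilon)(s)$, and taking the supremum over admissible pairs finishes the item.

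For (3), the sum bound is immediate from the pointwise inequality $|(f\pm g)(x_1)-(f\pm g)(x_2)|\le|f(x_1)-f(x_2)|+|g(x_1)-g(x_2)|$ and taking suprema. For the product, I would use the standard splitting $f(x_1)g(x_1)-f(x_2)g(x_2)=(f(x_1)-f(x_2))g(x_1)+f(x_2)(g(x_1)-g(x_2))$, bound pointwise by $|f(x_1)-f(x_2)|\,\|g\|_{L^\infty}+\|f\|_{L^\infty}\,|g(x_1)-g(x_2)|$, and take suprema. For (4), the forward direction is immediate: if $f$ is $\alpha$-Hölder with constant $L$, then $|x_1-x_2|<\varepsilon$ forces $|f(x_1)-f(x_2)|\le L\varepsilon^\alpha$, hence $\eta(f\mid\varepsilon)\le L\varepsilon^\alpha$. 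For the converse, fix distinct $x_1,x_2$, apply the hypothesis with $\varepsilon=|x_1-x_2|+\delta$ for each $\delta>0$ to obtain $|f(x_1)-f(x_2)|\le L(|x_1-x_2|+\delta)^\alpha$, and let $\delta\downarrow 0$.

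The only real ``obstacle'' here is careful bookkeeping; the substantive points worth emphasizing are the $\le$ versus $<$ in (1) and the integer rounding $n=\lceil C\rceil\le C+1$ that explains the constant $C+1$ in (2). Everything else is a direct application of the triangle inequality, the algebraic identity $ab-cd=(a-c)b+c(b-d)$, and a limiting argument $\delta\downarrow 0$.
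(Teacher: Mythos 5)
Your proof is correct, and each of the four items is handled by the standard argument: domain inclusion for the monotonicity in (1), telescoping over $\lceil C\rceil \le C+1$ intermediate points of spacing below $\varepsilon$ for (2), the triangle inequality and the splitting $ab-cd=(a-c)b+c(b-d)$ for (3), and a limiting argument $\varepsilon=|x_1-x_2|+\delta$, $\delta\downarrow 0$ for the converse in (4). The paper itself does not supply a proof of this lemma; it simply refers to Li--Yu, so there is no in-text argument to compare against. Two small observations in your write-up are worth keeping: the inequality in (1) should indeed be $\le$ rather than the strict $<$ printed in the statement (a constant $f$ makes both sides zero), and the constant $C+1$ in (2) is explained precisely by $\lceil C\rceil\le C+1$, with the case $0<C\le 1$ handled trivially by (1) since $C+1\ge 1$. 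These are the only points of any substance; the rest is bookkeeping, and your bookkeeping is sound.
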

	Define 
	\begin{equation}
		\eta \left(f_{1},f_{2},\dots ,f_{n}\mid\epsilon\right)\left(t\right):=
		\max \left\{\eta\left(f_{i}\mid\epsilon\right)\left(t\right): i=1, \dots, n\right\}.
	\end{equation}
	By definition, $\eta \left(f_{1},f_{2},\dots ,f_{n}\mid\epsilon\right)\left(t\right)$ has similar properties in Lemma \ref{modulus of continuity}. So we have the following lemma:
	\begin{lemma}
		If $u_0\in C^1(\mathbb{R})$ and $g_0\in C^1(\mathbb{R_-\cup\mathbb{R}_+})$. Then on the $\Omega_{M}^{\varepsilon}$, the modulus of continuity of $\left(J^{\varepsilon}\right)^{-1}v_{y}^{\varepsilon}$ and $v^{\varepsilon}_{\tau}$ is uniform, which means $\left(J^{\varepsilon}\right)^{-1}v_{y}^{\varepsilon}$ and $v^{\varepsilon}_{\tau}$ is equi-continuous.
	\end{lemma}
	\begin{proof}

		By \eqref{AR}, we have the following equation
		\begin{equation}\label{modulus}
			\begin{split}
				D\left(\frac{v^\ep_y}{J^\ep}\right)=-\left(2+\frac{g^{\varepsilon}p''(g^{\varepsilon})}{p'(g^{\varepsilon})}\right)\left(\frac{v^\ep_y}{J^\ep}\right)^2+\left(1+\frac{g^{\varepsilon}p''(g^{\varepsilon})}{p'(g^{\varepsilon})}\right)\frac{g^{\varepsilon}}{g_0}Z^{\varepsilon}_y\frac{v^\ep_y}{J^\ep}.
			\end{split}
		\end{equation}
		And $v_\tau$ can be expressed by $\frac{v^\ep_y}{J^\ep}$ and the lower order terms. Thus we only need to prove the modulus of continuity of $\frac{v^\ep_y}{J^\ep}$. To prove this, we first prove the modulus of continuity of the characteristics line. Then combine with the regularity of the lower order terms, we obtain the modulus of continuity of $\frac{v^\ep_y}{J^\ep}$.

		By the lemma \ref{uniform estimations 2}, we have that $g^\varepsilon$ and $J^\varepsilon$ are uniformly Lipschitzs functions on the $\Omega_M^\ep$. Thus for the characteristic line, for $\epsilon>0$ and $\left | \tilde{y}_{1}-\tilde{y}_{2} \right | < \epsilon $, without loss of generality, we assume $s<\tau$
		\begin{eqnarray}
			& & \left | y_{1}^{\varepsilon}\left(\tau;\tilde{y}_{1},s\right) -y_{1}^{\varepsilon}\left(\tau;\tilde{y}_{2},s\right)\right | \nonumber \\
			& \le & \left | \tilde{y}_{1}-\tilde{y}_{2}\right |
			+ \int_{s}^{\tau} \left | \mu^{\varepsilon}\left(y_{1}^{\varepsilon}\left(s';\tilde{y}_{1},s\right),s'\right) -
			\mu^{\varepsilon}\left(y_{1}^{\varepsilon}\left(s';\tilde{y}_{2},s\right),s'\right) \right | ds' \nonumber \\
			& \le & \left | \tilde{y}_{1}-\tilde{y}_{2}\right |
			+ C\left(M\right) \int_{s}^{\tau} 
			\left | y_{1}^{\varepsilon}\left(s';\tilde{y}_{1},s\right) -
			y_{1}^{\varepsilon}\left(s';\tilde{y}_{2},s\right) \right | ds'.
		\end{eqnarray}
		By $\rm{Gr\ddot{o}nwall}$'s inequality, we conclude that
		\begin{equation}\label{cha-mod}
			\left | y_{1}^{\varepsilon}\left(\tau;\tilde{y}_{1},s\right) -y_{1}^{\varepsilon}\left(\tau;\tilde{y}_{2},s\right)\right | 
			\le \left | \tilde{y}_{1}-\tilde{y}_{2}\right | e^{C\left(M\right)\left(\tau-s\right)} \le 
			\left | \tilde{y}_{1}-\tilde{y}_{2}\right | e^{C\left(M\right)\tau}.
		\end{equation}
		Then we consider the modulus of continuity of $\frac{v^\ep_y}{J^\ep}$. We consider the estimation on two directions: characteristic direction and $y$-direction. For characteristic direction, because $g^\ep$ and $v^\ep$ are uniform bounded in $C^1(\Omega_M^\ep)$,
		
		\begin{equation}
			\left | \frac{v^\ep_y}{J^\ep}\left(y_{1}^{\varepsilon}\left(\tau_{1};\tilde{y},\tilde{\tau}\right),\tau_{1}\right) -
			\frac{v^\ep_y}{J^\ep}\left(y_{1}^{\varepsilon}\left(\tau_{2};\tilde{y},\tilde{\tau}\right),\tau_{2}\right)\right | \le 
			\left | \int_{\tau_{1}}^{\tau_{2}}D\left(\frac{v^\ep_y}{J^\ep}\right)ds\right | 
			\le C\left(M\right)\left | \tau_{1}-\tau_{2} \right | .
		\end{equation}
		Now we consider the $y-$direction in which we need the \eqref{cha-mod}. For convenient, we use the denotation
		$Y^i\left(s\right)=\left(y^i\left(s\right),s\right)=\left(y^\ep_1\left(s;\tilde{y}_i,0\right),s\right)$. Thus for any $\left(\tilde{y}_i,0\right)\in\Omega_M^\varepsilon$, $i=1,2$, Let 
		\begin{equation}
			G\left(g^{\varepsilon}\right) = \frac{g^{\varepsilon}p''(g^{\varepsilon})}{p'(g^{\varepsilon})},
		\end{equation}
		By \eqref{modulus} we have
		\begin{eqnarray}
			& & \left|\frac{v^\ep_y}{J^\ep}\left(Y^1\left(\tau\right)\right)-\frac{v^\ep_y}{J^\ep}\left(Y^2\left(\tau\right)\right)\right| \nonumber \\
			& \le & \left|v'_{0}\left(\tilde{y_{1}}\right)-v'_{0}\left(\tilde{y_{2}}\right)\right| +
			C(M)\int_0^{\tau} \left|\left(Z_{0}^{\varepsilon}\right)_{y}\left(y^1\left(s\right)\right)-\left(Z_{0}^{\varepsilon}\right)_{y}\left(y^2\left(s\right)\right)\right|ds \nonumber \\
			& & + C\left(M\right)\int_0^{\tau} \left(\left|g^{\varepsilon}\left(Y^1\left(s\right)\right)-g^{\varepsilon}\left(Y^2\left(s\right)\right)\right| + \left|G\left(g^{\varepsilon}\left(Y^1\left(s\right)\right)\right)-G\left(g^{\varepsilon}\left(Y^2\left(s\right)\right)\right)\right|\right) ds \nonumber \\
			& & +  C\left(M\right)\int_0^{\tau}\left|\frac{v^\ep_y}{J^\ep}\left(Y^1\left(s\right)\right)-\frac{v^\ep_y}{J^\ep}\left(Y^2\left(s\right)\right)\right|ds \nonumber\\
			& \le & \eta\left(v'_{0}\big|\left|\tilde{y_{1}}-\tilde{y_{2}}\right|\right)\left(0\right)
			+ C(M) \int_0^{\tau} \eta\left(\left(Z_{0}^{\varepsilon}\right)_{y}\Big|\left|y^1\left(s\right)-y^2\left(s\right)\right|\right)\left(0\right) ds \nonumber \\
			&& + C(M) \int_0^{\tau} \left(\eta\left(g^{\varepsilon}\Big|\left|Y^1\left(s\right)-Y^2\left(s\right)\right|\right)\left(s\right) + \eta\left(G\left(g^{\varepsilon}\right)\Big|\left|Y^1\left(s\right)-Y^2\left(s\right)\right|\right)\left(s\right)\right) ds \nonumber \\
			&& + C(M) \int_0^{\tau} \eta\left(\frac{v^\ep_y}{J^\ep}\Big|\left|Y^1\left(s\right)-Y^2\left(s\right)\right|\right)\left(s\right) ds.
		\end{eqnarray}
		For first term we have
		\begin{equation}
			\eta\left(v'_{0}\big|\left|\tilde{y_{1}}-\tilde{y_{2}}\right|\right)\left(0\right) \le \eta\left(v'_{0}|\epsilon\right)\left(0\right).
		\end{equation}
		The second term we have 
		\begin{eqnarray}
			\eta\left(\left(Z_{0}^{\varepsilon}\right)_{y}\Big|\left|y^1\left(s\right)-y^2\left(s\right)\right|\right)\left(0\right) \le 
			\eta\left(\left(Z_{0}^{\varepsilon}\right)_{y}\Big|e^{C(M)s}\epsilon\right)\left(0\right) \le
			\left(e^{C(M)s}+1\right)\eta\left(\left(Z_{0}^{\varepsilon}\right)_{y}\Big|\epsilon\right)\left(0\right).
		\end{eqnarray}
		For the third term we have
		\begin{eqnarray}
			\eta\left(g^{\varepsilon}\Big|\left|Y^1\left(s\right)-Y^2\left(s\right)\right|\right)\left(s\right) \le C(M) \left|y^1\left(s\right)-y^2\left(s\right)\right| \le C(M)e^{C(M)s}\epsilon.
		\end{eqnarray}
		and
		\begin{eqnarray}
			\eta\left(G\left(g^{\varepsilon}\right)\Big|\left|Y^1\left(s\right)-Y^2\left(s\right)\right|\right)\left(s\right) \le \eta\left(G\left(g^{\varepsilon}\right)\Big|C(M)e^{C(M)s}\epsilon\right)\left(s\right) \le
			\left(C(M)e^{C(M)s}+1\right)\eta\left(G\Big|\epsilon\right)\left(s\right).
		\end{eqnarray}
		For the last term, we have
		\begin{equation}
			\begin{split}
				\eta\left(\frac{v^\ep_y}{J^\ep}\Big|\left|Y^1\left(s\right)-Y^2\left(s\right)\right|\right)\left(s\right) & \le
				\eta\left(\frac{v^\ep_y}{J^\ep}\Big|e^{C(M)s}\epsilon\right)\left(s\right) \le
				\left(e^{C(M)s}+1\right)\eta\left(\frac{v^\ep_y}{J^\ep}\Big|\epsilon\right)\left(s\right).
			\end{split}
		\end{equation}
		There exists a constant $C'(M)$ such that
		\begin{eqnarray}
			\eta\left(\frac{v^\ep_y}{J^\ep}\Big|\epsilon\right)\left(s\right) \le 
			C'(M) \left(\left(\epsilon+\eta\left(v'_{0},\left(Z_{0}^{\varepsilon}\right)_{y}\Big|\epsilon\right)\left(0\right)\right) + \int_{0}^{\tau} \eta\left(G\Big|\epsilon\right)\left(s\right) ds \right) + C'(M) \int_{0}^{\tau} \eta\left(\frac{v^\ep_y}{J^\ep}\Big|\epsilon\right)\left(s\right) ds.
		\end{eqnarray}
		
		For any $\varepsilon>0$, because $\tilde{y}_i$ are chosen arbitrary on the $\Omega_M^\ep$. Through the $\rm{Gr\ddot{o}nwall}$'s inequality, the above inequality equals to
		\begin{equation}
			\eta\left(\frac{v^\ep_y}{J^\ep}\bigg|\epsilon\right)\le C'(M)e^{C'(M)T}
			\left(\epsilon + \eta\left(v'_{0},\left(Z_{0}^{\varepsilon}\right)_{y}\Big|\epsilon\right)\left(0\right) + \int_{0}^{\tau} \eta\left(G\Big|\epsilon\right)\left(s\right) ds\right).
		\end{equation}
		Since $G$ is a continuous function, the modulus of continuity of $\frac{v^\ep_y}{J^\ep}$ on $\Omega_{M}^{\varepsilon}$ is uniform.

	\end{proof}

	For a compact set $H\subset \Omega_{M}^{\varepsilon}$, we could lift the convergence as $\left(J^{\varepsilon}\right)^{-1}v_{y}^{\varepsilon} \to \bar{J}^{-1}\bar{v}_{y}$ in $C\left(H\right)$ as $\varepsilon \to 0$. Then, for $(y,\tau)\in H$, we could have as $\varepsilon \to 0$, 
	\begin{equation}
		\bar{m}^{\varepsilon}\left(\tau\right) \to 
		m\left(\tau\right).
	\end{equation}
	By the convergence of the level set, we have 
	\begin{equation}\label{convergence of tMe}
		\lim_{\varepsilon \to 0} \bar{\tau} _{M}^{\varepsilon} = \tau_{M}.
	\end{equation}
	Furthermore, we consider the convergence of the blow-up time for $I(g)$ satisfying the following conditions:
	\begin{enumerate}[label=(\alph*)]
		\item $I\left(g\right)$ is a  increasing function with respect to $g$;
		\item There exists a small $\delta$ such that $\int_{0}^{\delta}\frac{I(s)}{s^{2}}ds=+\infty$.
	\end{enumerate}

	For example, $p(g)=\ln{g} $ meets the above conditions. Using the idea in \cite{Peng Wenjian Wang Tian-Yi Xiang Wei}, we have the following lemma.
	\begin{lemma}
		For $M>0$, if $I(g)$ meet the above conditions (1) and (2), then we have
		\begin{equation}
			\lim_{\varepsilon \to 0} T _{b}^{\varepsilon} = T_{b}.
		\end{equation}
	\end{lemma}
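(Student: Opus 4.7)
\textit{Proposal.} The plan is to combine the already-established convergences \eqref{convergence of tMe} and \eqref{convergence of tM} with the explicit Aw--Rascle blow-up formula \eqref{blow-up of general pressure} to establish both $\varliminf_{\varepsilon\to 0} T_b^\varepsilon \ge T_b$ and $\varlimsup_{\varepsilon\to 0} T_b^\varepsilon \le T_b$. Choose $T$ larger than $T_b$ so that $\tau_b=\min\{T_b,T\}=T_b$, and proceed in two steps.

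\textit{Step 1: Lower bound $\varliminf T_b^\varepsilon \ge T_b$.} First I would argue the inclusion $\bar\tau_M^\varepsilon \le T_b^\varepsilon$. Indeed, $\bar\tau_M^\varepsilon$ is by definition a time at which $(J^\varepsilon)^{-1}v_y^\varepsilon$ is still finite (equal to $-M$) somewhere in $\Omega_M^\varepsilon$; the Aw--Rascle solution is classical as long as this quantity is finite, so the solution exists on $[0,\bar\tau_M^\varepsilon]$, forcing $\bar\tau_M^\varepsilon \le T_b^\varepsilon$. Passing $\varepsilon \to 0$ in this inequality and using \eqref{convergence of tMe} yields $\varliminf_{\varepsilon\to 0} T_b^\varepsilon \ge \tau_M$ for each $M$. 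Sending $M\to\infty$ and applying \eqref{convergence of tM} gives $\varliminf_{\varepsilon\to 0} T_b^\varepsilon \ge T_b$.

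\textit{Step 2: Upper bound $\varlimsup T_b^\varepsilon \le T_b$.} Given $\delta>0$, pick $y^*\ne 0$ with $(v_0)_y(y^*)<0$ and $-1/(v_0)_y(y^*)<T_b+\delta$ (possible by \eqref{Tb}). For $\varepsilon$ small, $y^*$ lies in $\Omega_M^\varepsilon$, and the modulus-of-continuity estimates of the preceding lemma give $g^\varepsilon\to \bar g$ uniformly on compact subsets of the non-discontinuous region. Therefore, by continuity of $I$,
\begin{equation*}
    \int_0^\tau I\!\left(g^\varepsilon\!\left(y_1^\varepsilon(s;y^*,0),s\right)\right)ds \longrightarrow \int_0^\tau I(\bar g(y^*,s))\,ds
\end{equation*}
uniformly on $[0,T_b+\delta]$. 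A direct computation using $\bar g(y^*,\tau)=g_0(y^*)/(1+(v_0)_y(y^*)\tau)$ and the identity $I(\bar g)=-(\bar g^2/D_0\bar g)\,D_0(1/(\bar g^2 p'(\bar g)))$ shows that the denominator of \eqref{blow-up of general pressure} along this characteristic satisfies
\begin{equation*}
    1+g_0(y^*)p'(g_0(y^*))(v_0)_y(y^*)\int_0^\tau I(\bar g(y^*,s))\,ds = \frac{p'(g_0(y^*))(1+(v_0)_y(y^*)\tau)^2}{p'(\bar g(y^*,\tau))},
\end{equation*}
which vanishes exactly at $\tau=-1/(v_0)_y(y^*)<T_b+\delta$. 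Hence, for $\varepsilon$ small enough, the Aw--Rascle denominator also vanishes by $T_b+2\delta$, forcing $v_y^\varepsilon\to-\infty$ and therefore $T_b^\varepsilon \le T_b+2\delta$. Letting $\delta\to 0$ yields $\varlimsup T_b^\varepsilon \le T_b$.

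\textit{Main obstacle.} The principal technical difficulty is in Step 2: passing to the limit in $\int_0^\tau I(g^\varepsilon)\,ds$ uniformly up to the candidate blow-up time $\tau=-1/(v_0)_y(y^*)$. The modulus-of-continuity estimates hold only on the non-discontinuous region and only on $[0,\bar\tau_M^\varepsilon]$, while the blow-up time we want to reach is governed by $\tau_M$; so we must first fix $M$ large enough that $-1/(v_0)_y(y^*)<\tau_M$, and then exploit the uniformity of convergence before letting $\varepsilon\to 0$. A minor related point is that $\Omega_M^\varepsilon$ depends on $\varepsilon$ through $y_1^\varepsilon$, but since $|y_1^\varepsilon(\tau)|=O(\varepsilon^2)$, any fixed $y^*\ne 0$ eventually lies well inside $\Omega_M^\varepsilon$, so this does not obstruct the convergence.
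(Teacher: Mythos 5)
Your Step~2 revives an approach that actually appears in a commented-out lemma in the paper's source: the identity
\begin{equation*}
1+g_0(y^*)p'(g_0(y^*))(v_0)_y(y^*)\int_0^\tau I(\bar g(y^*,s))\,ds
= \frac{p'(g_0(y^*))\bigl(1+(v_0)_y(y^*)\tau\bigr)^2}{p'(\bar g(y^*,\tau))}
\end{equation*}
is exactly the formula the authors derived before abandoning that strategy. The published proof is structured differently: it splits into the cases $\inf(v_0)_y\ge 0$ and $\inf(v_0)_y<0$, and in the second case it does not try to follow the Riccati denominator until it vanishes; instead it derives the quantitative tail bound $\bar T_b^\varepsilon-\bar\tau_M^\varepsilon\le M_0/M$ from the reciprocal-density relation $D(1/g^\varepsilon)=g_0^{-1}(Z_0^\varepsilon)_y$, using the fact that $(v_0)_y\le-\delta_0$ along a nearly-minimizing characteristic, and then closes with the triangle inequality $|\bar T_b^\varepsilon-T_b|\le|\bar T_b^\varepsilon-\bar\tau_M^\varepsilon|+|\bar\tau_M^\varepsilon-\tau_M|+|\tau_M-T_b|$ together with $T_b^\varepsilon\le\bar T_b^\varepsilon$.

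The obstacle you flag in Step~2 is a genuine gap, and the resolution you propose cannot work. You want to fix $M$ so large that $-1/(v_0)_y(y^*)<\tau_M$. This is impossible: $\tau_M<T_b$ strictly for every finite $M$ (because $m(\tau)\to-\infty$ as $\tau\uparrow T_b$), whereas by the very definition $T_b=\inf\{-1/(v_0)_y\}$ one always has $-1/(v_0)_y(y^*)\ge T_b$ for every admissible $y^*$. Hence the required inequality chain $-1/(v_0)_y(y^*)<\tau_M<T_b\le-1/(v_0)_y(y^*)$ is self-contradictory, and the uniform convergence of $\int_0^\tau I(g^\varepsilon)\,ds$ supplied by the modulus-of-continuity estimates stops strictly short of the time at which the pressureless denominator vanishes. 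You therefore cannot conclude that the Aw--Rascle denominator reaches zero by $T_b+2\delta$. To make the idea work you would need to show that the denominator has already become small by time $\tau_M$ and then bound the additional time before blow-up in terms of that smallness; carried out carefully, this is what the paper's tail estimate $\bar T_b^\varepsilon-\bar\tau_M^\varepsilon\le M_0/M$ accomplishes, which is why the authors switched to it.

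A secondary point concerns Step~1: the justification ``the Aw--Rascle solution is classical as long as $(J^\varepsilon)^{-1}v_y^\varepsilon$ is finite on $\Omega_M^\varepsilon$'' is not correct, since $\bar m^\varepsilon$ controls $(J^\varepsilon)^{-1}v_y^\varepsilon$ only on the non-discontinuous region, and singularity formation could occur inside the excluded strip while $\bar m^\varepsilon$ remains bounded. The inequality $\bar\tau_M^\varepsilon\le T_b^\varepsilon$ is nonetheless defensible — the level-set time is computed from a solution that exists only on $[0,T_b^\varepsilon)$ — but your reasoning needs to be tightened to say exactly that.
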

	\begin{proof}
		Case 1: $\inf\left\{{v_{0}'\left(y\right)}\right\}\ge 0$.
		
		In this case, $T_{b}=+\infty$. Let $\alpha^{\varepsilon}:= g^{\varepsilon}p'\left(g^{\varepsilon}\right)\left(J^{\varepsilon}\right)^{-1}v_{y}^{\varepsilon}$, 
		by \eqref{blow-up of general pressure}
		\begin{equation}\label{alpha>0}
			\alpha^{\varepsilon}\left(y,0\right) \\
			= \alpha_{0}\left(y\right) = g_{0}p'\left(g_{0}\right) v_{0}'\left(y\right) \ge 0.
		\end{equation}
		In this case we want to show $\lim_{\varepsilon  \to 0} T_{b}^{\varepsilon} = + \infty$. 
		If the claim is false, we can find a subsequence $\left\{\varepsilon_{n}\right\}$ such that $\left\{T_{b}^{\varepsilon_{n}}\right\}$ is bounded. For any $\varepsilon_{n}$, there exist $y_{1}^{\varepsilon_{n}}$ defined by \eqref{characteristic line} such that when $\tau \uparrow T_{b}^{\varepsilon_{n}}$,
		\begin{equation}
			\lim_{\tau \uparrow T_{b}^{\varepsilon_{n}}} \alpha^{\varepsilon_{n}} = -\infty,
		\end{equation}
		which
		\begin{equation}
			1+\alpha^{\varepsilon_{n}}\left(y_{1}^{\varepsilon_{n}}\left(0;y,\tau\right),0\right)\int_{0}^{T_{b}^{\varepsilon_{n}}}I\left(g^{\varepsilon_{n}}\left(y_{1}^{\varepsilon_{n}}\left(s;y,\tau\right),s\right)\right)ds = 0.
		\end{equation}
		The above equation is equivalent to
		\begin{equation}
			- \frac{1}{\alpha^{\varepsilon_{n}}\left(y_{1}^{\varepsilon_{n}}\left(0;y,\tau\right)\right)} = \int_{0}^{T_{b}^{\varepsilon_{n}}}I\left(g^{\varepsilon_{n}}\left(y_{1}^{\varepsilon_{n}}\left(s;y,\tau\right),s\right)\right)ds.
		\end{equation}
		The right hand side is uniformly bounded by a finite positive constant
		\begin{equation}
			\int_{0}^{T_{b}^{\varepsilon_{n}}}I\left(g^{\varepsilon_{n}}\left(y_{1}^{\varepsilon_{n}}\left(s;y,\tau\right),s\right)\right)ds \le C.
		\end{equation}
		For $\alpha^{\varepsilon_{n}}$
		\begin{equation}
			\alpha^{\varepsilon_{n}}\left(y_{1}^{\varepsilon_{n}}\left(0;y,\tau\right)\right) \le -\frac{1}{C} < 0,
		\end{equation}
		This contradicts with \eqref{alpha>0}, so we have
		
		\begin{equation}
			\lim_{\varepsilon  \to 0} T_{b}^{\varepsilon} = + \infty = T_{b}.
		\end{equation}
		
		Case 2: $\inf\left\{{v_{0}'\left(y\right)}\right\}< 0$.
		
		For large $M$, we first proof that $\bar{T}_{b}^{\varepsilon}$ is uniformly bounded.
		If the claim is false, we can find a subsequence $\left\{\varepsilon_{n}\right\}$ such that $\bar{T}_{b}^{\varepsilon_{n}} \to + \infty$ as $n\to +\infty$.
		And there exist $y_{1}^{\varepsilon_{n}}$ defined by \eqref{characteristic line} such that when $\tau \uparrow \bar{T}_{b}^{\varepsilon_{n}}$, 
		\begin{equation}
			1+\alpha^{\varepsilon_{n}}\left(y_{1}^{\varepsilon_{n}}\left(0;y,\tau\right),0\right)\int_{0}^{\bar{T}_{b}^{\varepsilon_{n}}}I\left(g^{\varepsilon_{n}}\left(y_{1}^{\varepsilon_{n}}\left(s;y,\tau\right),s\right)\right)ds = 0.
		\end{equation}
		By \eqref{The lower bound independent of epsilon} and the monotonicity of $I(\rho)$, there exists a constant $C$ such that
		\begin{equation}
			C \ge -\frac{1}{\alpha^{\varepsilon_{n}}\left(y_{1}^{\varepsilon_{n}}\left(0;y,\tau\right)\right)} = \int_{0}^{\bar{T}_{b}^{\varepsilon_{n}}}I\left(g^{\varepsilon_{n}}\left(y_{1}^{\varepsilon_{n}}\left(s;y,\tau\right),s\right)\right)ds
			\ge \int_{0}^{\bar{T}_{b}^{\varepsilon_{n}}}I\left(\frac{A_{1}}{1+A_{2}Bs}\right)ds .
		\end{equation}
		Let $n\to \infty$ we have
		\begin{equation}
			C \ge \int_{0}^{+\infty}I\left(\frac{A_{1}}{1+A_{2}Bs}\right) ds 
			= A_{2}B\int_{0}^{+\infty}I\left(\frac{A_{1}}{1+s}\right)ds.
		\end{equation}
		The definitions of $A_{1}$, $A_{2}$ and $B$ see proposition \ref{vanishing pressure limit lower bound}.
		Let $y=\frac{A_{1}}{1+s}$ we have
		\begin{equation}\label{bounded contradiction of integral}
			C \ge A_{1}A_{2}B\int_{0}^{A_{1}}\frac{I\left(y\right)}{y^{2}}ds 
			= A_{2}B\int_{0}^{+\infty}I\left(\frac{A_{1}}{1+s}\right)ds.
		\end{equation}
		On the other hand, there exists $\delta$ such that
		\begin{equation}
			\int_{0}^{\delta}\frac{I\left(y\right)}{y^{2}}ds 
			= +\infty.
		\end{equation}
		This contradicts with \eqref{bounded contradiction of integral}, so $\bar{T}_{b}^{\varepsilon}$ is bounded with respect to $\varepsilon$.

		We set
		\begin{equation}
			-2\delta_{0}:=\inf\left\{v_{0}'\right\}<0.
		\end{equation}
		where $\delta_{0}$ is a constant independent from $\varepsilon$. 
		When $\varepsilon$ is small
		enough such that
		\begin{equation}
			\varepsilon^{2}\left \| \left(p\left(g_{0}\right)\right)_{y} \right \|_{L^{\infty}} < \frac{\delta_{0}}{2}.
		\end{equation}
		Then on $\Omega_{M}^{\varepsilon}$ there exists $y_{1}^{\varepsilon}\left(s;\tilde{y},0\right)$ 
		\begin{equation}
			v_{0}'\left(\tilde{y}\right)=-2\delta_{0},\quad
		\end{equation}
		when $\varepsilon<\varepsilon_{0}$ is small enough, $\tilde{y}$ and $y^{\varepsilon}_{1}\left(\bar{T}_{b}^{\varepsilon};\tilde{y},0\right)$ are close enough such that for $y\in \left[y^{\varepsilon}_{1}\left(\bar{T}_{b}^{\varepsilon};\tilde{y},0\right), \tilde{y}\right]$
		\begin{equation}
			v_{0}'\left(y\right) \le -\frac{3}{2}\delta_{0}.
		\end{equation}
		For $y\in \left[y^{\varepsilon}_{1}\left(\bar{T}_{b}^{\varepsilon};\tilde{y},0\right), \tilde{y}\right]$ we have
		\begin{equation}
			\left(Z^{\varepsilon}_{0}\right)_{y}\left(y\right) = v_{0}'\left(y\right) + \varepsilon^{2}\left(p\left(g_{0}\left(y\right)\right)\right)_{y} \le - \delta_{0}.
		\end{equation}

		Next, we want to show, for $\varepsilon$ small enough, there exists $M_{0}$ independent of $M$ such that
		\begin{equation}
			\bar{T}_{b}^{\varepsilon} - \bar{\tau}_{M}^{\varepsilon} \le \frac{M_{0}}{M}.
		\end{equation}
		For $g^{\varepsilon}$ we have
		\begin{equation}
			\int_{\bar{\tau}^{\varepsilon}_{M}}^{\bar{T}_{b}^{\varepsilon}} D\left(\frac{1}{g^{\varepsilon}}\right)\left(y^{\varepsilon}_{1}\left(s;\tilde{y},0\right),s\right) ds = \int_{\bar{\tau}^{\varepsilon}_{M}}^{\bar{T}_{b}^{\varepsilon}} g_{0}^{-1}\left(Z^{\varepsilon}_{0}\right)_{y} \left(y^{\varepsilon}_{1}\left(s;\tilde{y},0\right),s\right) ds.
		\end{equation}
		The above equation is equivalent to
		\begin{eqnarray}\label{upper bound of integral}
			\frac{1}{g^{\varepsilon}\left(y_{1}^{\varepsilon}\left(\bar{\tau}_{M}^{\varepsilon};\tilde{y},0\right),\bar{\tau}_{M}^{\varepsilon}\right)} & \ge & \frac{1}{g^{\varepsilon}\left(y_{1}^{\varepsilon}\left(\bar{\tau}_{M}^{\varepsilon};\tilde{y},0\right),\bar{\tau}_{M}^{\varepsilon}\right)} - \frac{1}{g^{\varepsilon}\left(y_{1}^{\varepsilon}\left(\bar{T}_{b}^{\varepsilon};\tilde{y},0\right),\bar{T}_{b}^{\varepsilon}\right)} \nonumber \\
			& = & \int_{\bar{\tau}_{M}^{\varepsilon}}^{\bar{T}_{b}^{\varepsilon}} - g_{0}^{-1}\left(Z_{0}^{\varepsilon}\right)_{y}\left(y_{1}^{\varepsilon}\left(s;\tilde{y},0\right)\right) \nonumber \\
			& \ge & \frac{\delta_{0}}{\bar{g}_{0}}\left(\bar{T}_{b}^{\varepsilon} - \bar{\tau}_{M}^{\varepsilon} \right).
		\end{eqnarray}
		When $M$ is large enough, by the convergence of $\left(g^{\varepsilon},v^{\varepsilon}\right)$ and the explicit expression of $\left(\bar{g},\bar{v}\right)$, we have for $\tau\in\left[0,\tau_{M}\right]$
		\begin{equation}
			\left | \frac{\bar{J}^{-1}\bar{v}_{y}\left(\tilde{y},\tau_{M}\right)}
			{g^{\varepsilon}\left(y_{1}^{\varepsilon}\left(\bar{\tau}_{M}^{\varepsilon};\tilde{y},0\right),\bar{\tau}_{M}^{\varepsilon}\right)} \right | \to 
			\left | \frac{v'_{0}\left(y\right)}
			{g_{0}\left(y\right)} \right | .
		\end{equation}
		There exists $M_{0}$ independent of $M$ such that
		\begin{equation} 
			\left | \frac{v'_{0}\left(y\right)}
			{g_{0}\left(y\right)} \right | < \frac{M_{0}}{2}.
		\end{equation}
		Then, for $\varepsilon$ small enough 
		\begin{equation}
			\left | \frac{\bar{J}^{-1}\bar{v}_{y}\left(\tilde{y},\tau_{M}\right)}
			{g^{\varepsilon}\left(y_{1}^{\varepsilon}\left(\bar{\tau}_{M}^{\varepsilon},\tilde{y},0\right),\bar{\tau}_{M}^{\varepsilon}\right)} \right | < M_{0}.
		\end{equation}
		Thus
		\begin{equation}\label{Tb-tau_M}
			\frac{1}
			{g^{\varepsilon}\left(y_{1}^{\varepsilon}\left(\bar{\tau}_{M}^{\varepsilon},\tilde{y},0\right),\bar{\tau}_{M}^{\varepsilon}\right)} < \frac{M_{0}}{\left | \bar{J}^{-1}\bar{v}_{y}\left(\tilde{y},\tau_{M}\right) \right |}
			= \frac{M_{0}}{M}.
		\end{equation}
		By \eqref{upper bound of integral} and \eqref{Tb-tau_M} we obtain the following inequality
		\begin{equation}
			\bar{T}_{b}^{\varepsilon} - \bar{\tau}_{M}^{\varepsilon} \le \frac{\bar{g}_{0}}{\delta_{0}}\cdot\frac{M_{0}}{M}.
		\end{equation}
		
		Next, we prove that $\lim_{\varepsilon \to 0} \bar{T} _{b}^{\varepsilon} = T_{b}$.
		There exists $y$ such that
		\begin{equation}
			\tau_{M} = -\frac{1}{v_{0}'\left(y\right)}-\frac{1}{M},\quad
			T_{b} = -\frac{1}{v_{0}'\left(y\right)}.
		\end{equation}
		Thus we have
		\begin{equation}
			\left | \tau_{M} - T_{b} \right | \le \frac{1}{M}.
		\end{equation}
		Then we can choose $M$ large enough such that
		\begin{equation}
			\left | \bar{T}_{b}^{\varepsilon} - \bar{\tau}_{M}^{\varepsilon} \right |
			+ \left | \tau_{M} - T_{b} \right | < \frac{\delta}{2}.
		\end{equation}
		Since $\left(g^{\varepsilon},v^{\varepsilon}\right)$ uniformly converges to $\left(\bar{g},\bar{v}\right)$ on
		any level set, there exists $\varepsilon_{1}$ such that for $\varepsilon\in\left(0,\varepsilon_{1}\right]$, we have
		\begin{equation}
			\left | \bar{\tau}_{M}^{\varepsilon} - \tau_{M} \right | < \frac{\delta}{2}.
		\end{equation}
		Combined with the above discussion, when $M$ is sufficiently large, for any $\delta>0$, there exists $\varepsilon_{1}$ such that for $\varepsilon\in\left(0,\varepsilon_{1}\right]$
		\begin{equation}
			\left | \bar{T}_{b}^{\varepsilon} - T_{b} \right | \le 
			\left | \bar{T}_{b}^{\varepsilon} - \bar{\tau}_{M}^{\varepsilon} \right |
			+ \left | \bar{\tau}_{M}^{\varepsilon} - \tau_{M} \right |
			+ \left | \tau_{M} - T_{b} \right | < \delta.
		\end{equation}

		Therefore, we have
		\begin{equation}
			\lim_{\varepsilon \to 0} \bar{T} _{b}^{\varepsilon} = T_{b}.
		\end{equation}
		Finally, we need to prove $\lim_{\varepsilon \to 0} T _{b}^{\varepsilon} = T_{b}$.
		By the definition of $\bar{T}_{b}^{\varepsilon}$, we have
		\begin{equation}
			T_{b} = \lim_{\varepsilon\to0} \bar{T}_{b}^{\varepsilon} \ge
			\varlimsup_{\varepsilon\to0} T_{b}^{\varepsilon} \ge
			\varliminf_{\varepsilon\to0} T_{b}^{\varepsilon}.
		\end{equation}
		On the other hand, by \eqref{relationship of Tb} we have
		\begin{equation}
			T_{b} \le \varliminf_{\varepsilon\to0} T_{b}^{\varepsilon}.
		\end{equation}
		Therefore, we have
		\begin{equation}
			\varlimsup_{\varepsilon\to0} T_{b}^{\varepsilon} =
			\varliminf_{\varepsilon\to0} T_{b}^{\varepsilon} = \lim_{\varepsilon \to 0} T _{b}^{\varepsilon} = T_{b}.
		\end{equation}
		
	\end{proof}

	\section{Convergence Rates}
	In this section, we will consider the convergence rates of solution on each $\left[0,t_{M}\right]$ in Eulerian coordinates.
	For $u^{\varepsilon}$ and $z^{\varepsilon}$, we have the following lemma.
	\begin{lemma}
		For $M>0$, on $\mathbb{R}\times\left[0,t_{M}\right]$, $u^{\varepsilon } \sim z^{\varepsilon } \sim \lambda^{\varepsilon}_{1} \sim \lambda^{\varepsilon}_{2} \sim \bar{x} \sim x_{2}^{\varepsilon} \sim \bar{u} \left ( O\left ( \varepsilon ^{2}  \right )  \right ) $.
	\end{lemma}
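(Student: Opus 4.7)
The claim splits naturally into two estimates: $|u^\varepsilon-z^\varepsilon|=O(\varepsilon^2)$ and $|u^\varepsilon-\bar u|=O(\varepsilon^2)$, which together imply $|z^\varepsilon-\bar u|=O(\varepsilon^2)$ by the triangle inequality.

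\textbf{Step 1 (trivial half).} By the definition $z^\varepsilon=u^\varepsilon+\varepsilon^2 p(\rho^\varepsilon)$ in \eqref{Riemann invariants}, one has $u^\varepsilon-z^\varepsilon=-\varepsilon^2 p(\rho^\varepsilon)$. On $\mathbb{R}\times[0,t_M]$ the density satisfies the uniform two-sided bound $\underline\rho\le\rho^\varepsilon\le\bar\rho_M$ supplied by Lemma \ref{uniform estimations 2}, and $p\in C^2$ is continuous on this compact range, so $|u^\varepsilon-z^\varepsilon|\le C(M)\varepsilon^2$.

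\textbf{Step 2 (main half).} Set $w:=u^\varepsilon-\bar u$. Subtracting the Burgers-type equation $\bar u_t+\bar u\,\bar u_x=0$ from the first equation of \eqref{equation of Riemann invariants 2} and reorganizing $u^\varepsilon u^\varepsilon_x-\bar u\bar u_x=u^\varepsilon w_x+\bar u_x w$, one obtains the linear transport equation
\begin{equation*}
w_t+u^\varepsilon w_x+\bar u_x\, w=\varepsilon^2\rho^\varepsilon p'(\rho^\varepsilon)\,u^\varepsilon_x,\qquad w|_{t=0}=0,
\end{equation*}
with vanishing initial data. Along the flow $dx/dt=u^\varepsilon$ this reduces to the scalar ODE $\tfrac{d}{dt}w=-\bar u_x w+\varepsilon^2\rho^\varepsilon p'(\rho^\varepsilon)u^\varepsilon_x$. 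The convective field $u^\varepsilon$ is Lipschitz, so the flow is well defined; by Lemma \ref{uniform estimations 2} the coefficient $\rho^\varepsilon p'(\rho^\varepsilon)u^\varepsilon_x$ is uniformly $L^\infty$-bounded in $\varepsilon$; and by the very definition of $t_M$ one has $\bar u_x\ge-M$ together with the upper bound from the explicit formula for $\bar u_x$. Variation of parameters with $w(x,0)=0$ then gives $|w(x,t)|\le C(M)\varepsilon^2$ on $\mathbb{R}\times[0,t_M]$ via Gronwall's inequality.

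\textbf{Main obstacle.} The delicate point is that $u^\varepsilon_x$ (equivalently $(J^\varepsilon)^{-1}v_y^\varepsilon$) jumps across the contact discontinuity line $\Gamma^\varepsilon$, since $\rho^\varepsilon$ jumps there, so the source term in the transport equation is only piecewise smooth. However, $u^\varepsilon$ itself stays Lipschitz across $\Gamma^\varepsilon$ (contact discontinuity: $[u]=0$), hence the $\lambda_2$-characteristic $dx/dt=u^\varepsilon$ is globally Lipschitz and uniquely defined, and the source lies in $L^\infty$; the ODE integration only uses almost-everywhere values of $u^\varepsilon_x$, so the Gronwall bound passes through the discontinuity without change. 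Combining Steps 1 and 2 yields $|u^\varepsilon-\bar u|,\,|z^\varepsilon-u^\varepsilon|,\,|z^\varepsilon-\bar u|\le C(M)\varepsilon^2$, which is the asserted rate $u^\varepsilon\sim z^\varepsilon\sim\bar u\,(O(\varepsilon^2))$.
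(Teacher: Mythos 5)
Your proposal is correct, but it takes a genuinely different route from the paper, and the comparison is instructive.

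For the comparison $u^\varepsilon\sim z^\varepsilon$ you simply use the algebraic identity $z^\varepsilon-u^\varepsilon=\varepsilon^2 p(\rho^\varepsilon)$ together with the uniform two-sided density bound on $[0,t_M]$. This is cleaner than the paper, which establishes this rate indirectly through the estimate $|x_1^\varepsilon(s)-x_2^\varepsilon(s)|\le C(M)\varepsilon^2$ on the characteristics and then converts it using the Lipschitz bound on $u^\varepsilon$ (though the paper also invokes the pointwise identity implicitly). For the comparison $u^\varepsilon\sim\bar u$ the paper again compares characteristics: it shows $|\bar x(s)-x_1^\varepsilon(s;\bar x(0),0)|\le C(M)\varepsilon^2$ via Gronwall, then uses the fact that $u^\varepsilon$ and $\bar u$ are constant along their respective characteristics and Lipschitz in $x$ to transfer the characteristic-line gap into a gap of the function values. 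You instead write the PDE for the difference $w=u^\varepsilon-\bar u$, namely $w_t+u^\varepsilon w_x+\bar u_x\,w=\varepsilon^2\rho^\varepsilon p'(\rho^\varepsilon)u_x^\varepsilon$ with $w|_{t=0}=0$, and Gronwall along the $u^\varepsilon$-flow. This works and is structurally more PDE-theoretic, but note what each buys: the paper's argument uses only the Lipschitz regularity of $u^\varepsilon,z^\varepsilon,\bar u$, all of which are globally defined and continuous across the contact line, so the discontinuity causes no difficulty; your argument introduces $u_x^\varepsilon$, $\bar u_x$ and $\rho^\varepsilon$ as source and coefficient, which are only $L^\infty$ and jump precisely on the contact curve $x=x_2^\varepsilon(t)$, which is itself an integral curve of $dx/dt=u^\varepsilon$. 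Your parenthetical that "the ODE integration only uses almost-everywhere values" is not quite precise along that one special flow line, where the coefficients are undefined for all $t$ (a one-dimensional null set in the plane is the whole curve, not an a.e. set along the curve). The cleanest repair is the one you gesture at: carry out the Gronwall estimate on the two open regions off the discontinuity line, obtaining $|w|\le C(M)\varepsilon^2$ there, and then pass to the line by continuity of $w=u^\varepsilon-\bar u$. With that sentence spelled out the proposal is complete and yields the same rate as the paper.
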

	\begin{remark}
		$\lambda^{\varepsilon}_{1} \sim \lambda^{\varepsilon}_{2} \sim \bar{u}\left(O\left(\varepsilon^{2}\right)\right)$, the characteristics triangle vanishing as the $\varepsilon^{2}$ order.
	\end{remark}
	\begin{proof}
		In order to simplify the calculation, we set
		\begin{equation} 
			\begin{cases}
				x_{1}^{\varepsilon}\left(s\right) = x_{1}^{\varepsilon}\left(s;\tilde{x},\tilde{t}\right), \\
				x_{2}^{\varepsilon}\left(s\right) = x_{2}^{\varepsilon}\left(s;\tilde{x},\tilde{t}\right), \\
				\bar{x}^{\varepsilon}\left(s\right) = \bar{x}^{\varepsilon}\left(s;\tilde{x},\tilde{t}\right), \\
			\end{cases}
		\end{equation}
		which are defined at \eqref{x-i} and \eqref{bar x}.
		On $\mathbb{R}\times\left[0,t_{M}\right]$
		\begin{eqnarray}
			\left | x_{1}^{\varepsilon}\left(s\right) - x_{2}^{\varepsilon}\left(s\right) \right | 
			& = & \left | \int_{\tilde{t}}^{s}
			\left(\lambda_{1}^{\varepsilon}\left(x_{1}^{\varepsilon}\left(t\right),t\right) - \lambda_{2}^{\varepsilon}\left(x_{2}^{\varepsilon}\left(t\right),t\right)\right) dt \right | \nonumber \\
			& \le & \int_{\tilde{t}}^{s}
			\left |  \lambda_{1}^{\varepsilon}\left(x_{1}^{\varepsilon}\left(t\right),t\right) - \lambda_{2}^{\varepsilon}\left(x_{2}^{\varepsilon}\left(t\right),t\right) \right | dt \nonumber \\
			& \le & \int_{\tilde{t}}^{s}
			\left |  u^{\varepsilon}\left(x_{1}^{\varepsilon}\left(t\right),t\right) - u^{\varepsilon}\left(x_{2}^{\varepsilon}\left(t\right),t\right) \right | dt + C'\left(M\right) s \varepsilon^{2} \nonumber \\
			& \le & C''\left(M\right) \int_{\tilde{t}}^{s}
			\left |  x_{1}^{\varepsilon}\left(t\right) - x_{2}^{\varepsilon}\left(t\right) \right | dt + C'\left(M\right) s \varepsilon^{2}.
		\end{eqnarray}
		By $\rm{Gr\ddot{o}nwall}$'s inequality
		\begin{equation}
			\left | x_{1}^{\varepsilon}\left(s\right) - x_{2}^{\varepsilon}\left(s\right) \right | 
			\le C'\left(M\right) s \varepsilon^{2} \left(e^{C''\left(M\right) \left(s-\tilde{t}\right)}-1\right) 
			\le C\left(M\right) \varepsilon^{2} .
		\end{equation}
		Here, $C\left(M\right)$ is a uniform constant with respect to $\varepsilon$. Thus, we can see the different
		characteristic lines converge with the rate $O\left(\varepsilon^{2}\right)$ on $\mathbb{R}\times\left[0,t_{M}\right]$. Then we can have
		the following estimate of $z^{\varepsilon}$ and $u^{\varepsilon}$
		\begin{eqnarray}
			\left | z^{\varepsilon}\left(x_{2}^{\varepsilon}\left(t\right),t\right) - u^{\varepsilon}\left(x_{1}^{\varepsilon}\left(t\right),t\right) \right | 
			& \le & \left | u^{\varepsilon}\left(x_{2}^{\varepsilon}\left(t\right),t\right) - u^{\varepsilon}\left(x_{1}^{\varepsilon}\left(t\right),t\right) \right | + C'\left(M\right)\varepsilon^{2} \nonumber \\
			& \le & C''\left(M\right)\left | x_{1}^{\varepsilon}\left(s\right) - x_{2}^{\varepsilon}\left(s\right) \right | + C'\left(M\right)\varepsilon^{2} \nonumber \\
			& \le & \left(C''\left(M\right)C\left(M\right)+C'\left(M\right)\right)\varepsilon^{2} \nonumber \\
			& \le & C'''(M)\varepsilon^{2}.
		\end{eqnarray}
		It means that $z^{\varepsilon}$ converges to $u^{\varepsilon}$ with the rate of $\varepsilon^{2}$. By the same way, we can prove that $\lambda^{\varepsilon}_{1} \sim \lambda^{\varepsilon}_{2} \sim z^{\varepsilon} \sim u^{\varepsilon} \left(O\left(\varepsilon^{2}\right)\right)$ on $\mathbb{R}\times\left[0,t_{M}\right]$. Next, we consider the convergence rate of $u^{\varepsilon}$ to $\bar{u}$.
		\begin{eqnarray}
			\left | \bar{x}\left(s\right) - x_{1}^{\varepsilon}\left(s;\bar{x}\left(0\right),0\right) \right | & \le & \int_{0}^{s} \left | \bar{u}\left(\bar{x}\left(t\right),t\right)
			- \lambda_{1}^{\varepsilon} \left(x_{1}^{\varepsilon}\left(t;\bar{x}\left(0\right),0\right),t\right) \right | dt \nonumber \\
			& \le & \int_{0}^{s} \left | \bar{u}\left(\bar{x}\left(t\right),t\right)
			- u^{\varepsilon} \left(x_{1}^{\varepsilon}\left(t;\bar{x}\left(0\right),0\right),t\right) \right | dt + C'\left(M\right)s\varepsilon^{2} \nonumber \\
			& \le & \int_{0}^{s} \left | u_{0}\left(\bar{x}\left(0\right)\right)
			- u_{0} \left(\bar{x}\left(0\right)\right) \right | ds + C'\left(M\right)s\varepsilon^{2} \nonumber \\
			& \le & C\left(M\right)\varepsilon^{2}.
		\end{eqnarray}
		Then we can have
		the following estimate of $u^{\varepsilon}$ and $\bar{u}$
		\begin{eqnarray}\label{convergence of u and bar u}
			\left | u^{\varepsilon}\left(\tilde{x},\tilde{t}\right) - \bar{u}\left(\tilde{x},\tilde{t}\right) \right | 
			& \le & \left | u^{\varepsilon}\left(\tilde{x},\tilde{t}\right) - u^{\varepsilon}\left(x_{1}^{\varepsilon}\left(\tilde{t};\bar{x}\left(0\right),0\right),\tilde{t}\right) \right | 
			+ \left |  u^{\varepsilon}\left(x_{1}^{\varepsilon}\left(\tilde{t};\bar{x}\left(0\right),0\right),\tilde{t}\right) - \bar{u}\left(\bar{x}\left(\tilde{t}\right),\tilde{t}\right) \right | \nonumber \\
			& \le & \left | u^{\varepsilon}\left(\tilde{x},\tilde{t}\right) - u^{\varepsilon}\left(x_{1}^{\varepsilon}\left(\tilde{t};\bar{x}\left(0\right),0\right),\tilde{t}\right) \right | 
			+ \left |  u_{0}\left(\bar{x}\left(0\right) \right) - u_{0}\left(\bar{x}\left(0\right)\right) \right | \nonumber \\
			& \le & C''\left(M\right)\left | \tilde{x} - x_{1}^{\varepsilon}\left(\tilde{t};\bar{x}\left(0\right),0\right) \right | \nonumber \\
			& \le & C''\left(M\right)\left | \bar{x}\left(\tilde{t}\right) - x_{1}^{\varepsilon}\left(\tilde{t};\bar{x}\left(0\right),0\right) \right | \nonumber \\
			& \le & C''\left(M\right)C\left(M\right)\varepsilon^{2}.
		\end{eqnarray}
		We conclude that $ z^{\varepsilon} \sim u^{\varepsilon} \sim \bar{u} \left(O\left(\varepsilon^{2}\right)\right)$.
		
		Finally, we prove the convergence of discontinuous lines $\bar{x}(t)$ and $x^{\varepsilon}_{1}(t)$. Without loss of generality, we consider that there is only one discontinuity at $x=0$, and discuss the convergence of the discontinuity lines passing through 0. For the case of separable discontinuous points, we have similar results. By \eqref{convergence of u and bar u} and the uniform boundedness of $u_{x}^{\varepsilon}$, we have
		\begin{eqnarray}
			&& \left | \bar{x}\left(s;0,0\right) - x_{2}^{\varepsilon}\left(s;0,0\right) \right | \nonumber \\
			& \le & \int_{0}^{s} \left | \bar{u}\left(\bar{x}\left(t;0,0\right),t\right)
			- u^{\varepsilon} \left(x_{2}^{\varepsilon}\left(t;0,0\right),t\right)\right | dt \nonumber \\
			& \le & \int_{0}^{s} \left | \bar{u}\left(\bar{x}\left(t;0,0\right),t\right)
			- u^{\varepsilon} \left(\bar{x}\left(t;0,0\right),t\right)\right | 
			+ \left | u^{\varepsilon}\left(\bar{x}\left(t;0,0\right),t\right)
			- u^{\varepsilon} \left(x_{2}^{\varepsilon}\left(t;0,0\right),t\right)\right | dt \nonumber \\ 
			& \le & C(M)s\varepsilon^{2} + \int_{0}^{s} \left | u^{\varepsilon}\left(\bar{x}\left(t;0,0\right),t\right)
			- u^{\varepsilon} \left(x_{2}^{\varepsilon}\left(t;0,0\right),t\right)\right | dt \nonumber \\ 
			& \le & C(M)s\varepsilon^{2} + C'(M) \int_{0}^{s} \left | \bar{x}\left(t;0,0\right)
			- x_{2}^{\varepsilon}\left(t;0,0\right) \right | dt.
		\end{eqnarray}
		By $\rm{Gr\ddot{o}nwall}$'s inequality
		\begin{equation}
			\left | \bar{x}\left(s;0,0\right) - x_{2}^{\varepsilon}\left(s;0,0\right) \right | \le C(M)s\varepsilon^{2}e^{C'(M)s} \le C''(M)\varepsilon^{2}.
		\end{equation}
		We conclude that the discontinuous lines converges at the rate of $\varepsilon^{2}$.
		
		\section*{Acknowledgments }
		Tian-Yi Wang would like to thank Professor Pierangelo Marcati for introducing him to the Aw-Rascle Traffic Flow Model in the Russian seminar. He is deeply grateful for Professor Marcati's positive influence, invaluable help and support in all aspects, and fondly recalls the wonderful time working at GSSI and living in L'Aquila. The research of Tian-Ti Wang was supported in part by the National Science Foundation of China(Grants 12371223, 12061080).
		
		\section*{Conflict of interest}
		The authors  declare no conflict of interest.

	\end{proof}

	\vspace{0.5cm}
	
	\noindent \footnotesize{
		
		\noindent Zijie Deng: Wuhan University of Technology \\ 
		\emph{Email adress}: zijiedeng@whut.edu.cn \\

		\noindent Wenjian Peng: City University of Hong Kong \\
		\emph{Email adress}: wjpeng5-c@my.cityu.edu.hk \\

		\noindent Tian-Yi Wang: Wuhan University of Technology \\
		\emph{Email adress}: wangtianyi@amss.ac.cn \\

    	\noindent Haoran Zhang: Hong Kong Polytechnic University \\
    	\emph{Email adress}: mike-haoran.zhang@connect.polyu.hk \\

\end{document}